\DeclareMathOperator{\dist}{dist}
\tikzstyle{legend_general}=[rectangle, rounded corners, thin,
\newtheorem{theorem}{Theorem}[section]
\newtheorem{conjecture}[theorem]{Conjecture}
\newtheorem{lemma}[theorem]{Lemma}
\newtheorem{corollary}[theorem]{Corollary}
\theoremstyle{definition}
\newtheorem{definition}[theorem]{Definition}
\title{On the cop number of graphs of high girth}
\author{Peter Bradshaw}
\address{Department of Mathematics, Simon Fraser University, Burnaby, Canada}
\email{pabradsh@sfu.ca}
\author{Seyyed Aliasghar Hosseini}
\address{Department of Mathematics, Simon Fraser University, Burnaby, Canada}
\email{sahossei@sfu.ca}
\author{Bojan Mohar$^*$}
\address{Department of Mathematics, Simon Fraser University, Burnaby, Canada}
\email{mohar@sfu.ca}
\thanks{$^*$Supported in part by the NSERC Discovery Grant R611450 (Canada), 
and by the Research Project J1-8130 of ARRS (Slovenia).}
\thanks{$^*$On leave from IMFM, Department of Mathematics, University of Ljubljana.}
\author{Ladislav Stacho$^\dag$}
\address{Department of Mathematics, Simon Fraser University, Burnaby, Canada}
\email{lstacho@sfu.ca}
\thanks{$^\dag$ Supported in part by the NSERC Discovery Grant R611368 (Canada)}
\begin{document}

\begin{abstract}
We establish a lower bound for the cop number of graphs of high girth in terms of the minimum degree, and more generally, in terms of a certain growth condition. We show, in particular, that the cop number of any graph with girth $g$ and minimum degree $\delta$ is at least $\tfrac{1}{g}(\delta - 1)^{\lfloor \frac{g-1}{4}\rfloor}$. We establish similar results for directed graphs. While exposing several reasons for conjecturing that the exponent $\tfrac{1}{4}g$ in this lower bound cannot be improved to $(\tfrac{1}{4}+\varepsilon)g$, we are also able to prove that it cannot be increased beyond $\frac{3}{8}g$. This is established by considering a certain family of Ramanujan graphs. In our proof of this bound, we also show that the ``weak" Meyniel's conjecture holds for expander graph families of bounded degree.
\end{abstract}

\maketitle

\section{Introduction}

We consider the \emph{game of cops and robbers}, a two-player game played with perfect information on a finite connected graph $G$. In the game, the first player controls a team of \emph{cops}, and the second player controls a \emph{robber}. At the beginning, the first player places each cop on her team at a vertex of $G$, and then the second player places the robber at a vertex of $G$. The two players take turns. On the first player's turn, she may move each cop $C$ to a vertex in the closed neighborhood of $C$, and on the second player's turn, he may move the robber to any vertex in the closed neighborhood of the robber's current vertex. The first player wins the game if a cop moves to the same vertex as the robber; in this case, the cop \emph{captures} the robber. The second player wins by letting the robber avoid capture indefinitely. In order to establish a finite-time win condition, we may also say that the second player wins the game if the same game position occurs twice; this change makes the game finite and does not affect the strategy of either player.

The game of cops and robbers was introduced by Quilliot \cite{Quilliot}, and independently by Nowakowski and Winkler \cite{Nowakowski}. Aigner and Fromme \cite{Aigner} introduced the concept of the \textit{cop number} $c(G)$ of a graph $G$, which denotes the number of cops that the first player needs to capture the robber on $G$ with optimal play. Upper bounds on cop number are well-understood for many classes of graphs. For instance, planar graphs and toroidal graphs have cop number at most $3$ \cite{Aigner,LehnerTorus}. Recently, Bowler et al.\ \cite{Bowler} proved that the cop number of a graph of genus $g$ is at most $\frac{4}{3}g + \frac{10}{3}$, thus improving an earlier bound of $\frac{3}{2}g + 3$ by Schr\"oder \cite{Schroeder}. In a similar flavor, Andreae \cite{Andreae} proved that excluding a graph $H$ with $t$ edges as a minor in a graph $G$ implies that $c(G)\le t$. There are also cop number bounds for highly symmetrical graphs. For instance, Frankl \cite{FranklGirth} showed that normal Cayley graphs of degree $d$ have cop number at most $d$. More recently, it was shown that abelian Cayley graphs on $n$ vertices have cop number at most $0.95\sqrt{n} + 2$ \cite{Jeremie}. Lower bounds for cop number are also known for several graph classes. For instance, projective plane incidence graphs with $2q^2 + 2q + 2$ vertices have cop number at least $q+1$ \cite{Baird}, and certain abelian Cayley graphs on $n$ vertices have cop number $\Omega(\sqrt{n})$, with some families achieving their cop number as high as $\frac{1}{2} \sqrt{n}$ \cite{Hasiri,Jeremie}.
Furthermore, Bollob\'as et al.~\cite{BollobasKun} and Pra\l at and Wormald \cite{PraWor2} showed that random graphs in ${\mathcal G}(n,p)$ have cop number of order $\Theta(\sqrt{n})$ a.a.s. Finally, for sparse random graphs (random $d$-regular graphs with $d$ constant), an upper bound of $O(\sqrt n)$ also holds a.a.s.~\cite{PraWor1}.

In this paper, we focus on lower bounds for the cop number of graphs of high girth. Aigner and Fromme \cite{Aigner} proved that graphs with girth at least $5$ and minimum degree $\delta$ have cop number at least $\delta$. Their ideas were extended later by Frankl \cite{FranklGirth}, who showed more generally that graphs with girth at least $8t - 3$ have cop number at least $(\delta - 1)^t + 1$. We seek to improve these lowers bounds. Our main result, proved in Section \ref{sectionUndirected}, is the following lower bound for cop number in terms of a graph's girth $g$ and minimum degree $\delta$. 

\begin{theorem}
Let $t \geq 1$ be an integer, and let $G$ be a graph of girth $g \geq 4t + 1$ and with minimum degree $\delta$. Then $c(G) > \frac{1}{e\,t} (\delta-1)^{t}$.
\label{thmWeight}
\end{theorem}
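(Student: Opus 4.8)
The plan is to prove the contrapositive: assuming at most $k \le \tfrac{1}{et}(\delta-1)^{t}$ cops, I will describe a robber strategy that evades capture forever. The whole argument is driven by the fact that girth $g \ge 4t+1$ makes the graph locally tree-like: any two vertices at distance at most $2t$ are joined by a \emph{unique} geodesic (two distinct ones would span a closed walk of length at most $4t<g$), and in particular the radius-$t$ ball $T = B_t(r)$ around the robber's position $r$ induces a tree, since a cycle inside it would need radius at least $\lfloor g/2\rfloor = 2t > t$ to be enclosed. I would treat the depth-$t$ non-backtracking paths (``routes'') out of $r$ as the robber's menu of escape directions; there are at least $\delta(\delta-1)^{t-1}\ge(\delta-1)^{t}$ of them, and they have pairwise distinct endpoints. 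The \emph{weight} referenced in the theorem's name is a count of the routes that are currently safe, and the robber's job is to keep this count positive at all times, so that he always has a direction to flee.

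The technical core is a single-cop lemma bounding how many routes one cop can spoil. Only cops within distance $2t$ of $r$ are relevant: a cop at distance $D$ can reach a route vertex $u_j$ (at depth $j\le t$) in time only if $d(c,u_j)\le j$, which forces $D\le 2j\le 2t$. Using unique geodesics in this range, I would show that if the cop's direction out of $r$ agrees with a route down to depth $a$, then interception is possible precisely when $D \le 2a$, i.e.\ the cop effectively ``guards'' the entire subtree hanging below depth $\lceil D/2\rceil$ on its own branch. Consequently a cop at distance $D$ spoils at most $(\delta-1)^{\,t-\lceil D/2\rceil}$ routes, equivalently it contributes at most $(\delta-1)^{-\lceil D/2\rceil}$ to the Kraft-type sum
\[
K \;=\; \sum_{i}(\delta-1)^{-\lceil D_i/2\rceil},
\]
and by the Kraft inequality a free route survives whenever $K<1$. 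Thus it suffices for the robber to maintain $K<1$, and deep cops (large $D_i$) are cheap while cops that press in close (small $D_i$) are expensive.

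The remaining, and most delicate, part is the dynamic maintenance of $K<1$. At each of the robber's turns the cops partition among the $\ge\delta$ directions at $r$ according to which neighbor lies on their geodesic toward $r$; the robber moves into a direction carrying little weight, which pushes most cops one step farther (dividing their contributions) while only the few cops ``ahead'' draw closer. I would organize this into phases of length $\Theta(t)$ during which the robber grows a fresh escape route from length $0$ to $t$, one non-backtracking edge per round, discarding continuations that have become guarded. A careful accounting of how many cops can newly move into guarding position during a phase shows that only about a $1/t$ fraction of the $\delta-1$ continuations is lost per step, so at least $(\delta-1)(1-\tfrac1t)$ survive on average and at least $(\delta-1)^{t}(1-\tfrac1t)^{t}$ complete safe routes remain at the phase's end; optimizing the phase length and the weighting converts this into the bound $K<1$ exactly when $k\le\tfrac1{et}(\delta-1)^t$, with the factor $e$ emerging from the $(1\pm\tfrac1t)^{t}$ term and the linear loss $\tfrac1t$ from amortizing over the $t$ steps of a phase.

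\textbf{Main obstacle.} The one-shot counting via the guard lemma alone does not close, because cops at distance $1$ can each spoil a $(\delta-1)^{-1}$ fraction of routes, and a naive greedy ``flee the weakest direction'' potential can be made to grow under optimal chasing. The crux is therefore the amortized/phased control: proving that the cops cannot cheaply and repeatedly re-converge to shrink the pool of safe routes, so that $K$ stays below $1$ throughout. This is precisely where the girth hypothesis $g\ge 4t+1$ is used tightly (to guarantee unique geodesics up to distance $2t$ and hence the depth-$\lceil D/2\rceil$ guard bound) and where the exact constant $\tfrac{1}{et}$ is pinned down; I expect the bookkeeping that shows at most a $1/t$ fraction of continuations is lost per step to be the main difficulty.
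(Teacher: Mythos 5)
Your setup (unique geodesics within distance $2t$, the tree of depth-$t$ escape routes, and the guard lemma that a cop at distance $D$ controls the subtree hanging below depth $\lceil D/2\rceil$ of its own branch) matches the paper's, and you correctly diagnose that the naive Kraft sum with base $\delta-1$ does not close under a greedy ``flee the lightest branch'' rule. But the proposal stops exactly where the theorem's content lies: the claim that in your phased scheme ``only about a $1/t$ fraction of the $\delta-1$ continuations is lost per step'' is asserted, not proved, and you yourself flag it as the main difficulty. As written, there is no argument preventing the cops from re-concentrating their guarding power on the robber's current branch faster than the phase structure dilutes it, so the potential you need to control is left uncontrolled and the proof is incomplete.

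The paper closes this gap not by amortizing over phases but by deflating the base of the geometric weights. Set $q=\delta-1$ and $r=\bigl(1-\tfrac1t\bigr)q$, and give a cop at distance $\rho\le 2t$ whose geodesic to the robber enters through the neighbor $u_i$ the weight $w(C)=r^{\,t-\lceil\rho/2\rceil}$ (base $r$, not $q$); all other cops get weight $1$, spread evenly as $k/q$ over the $q$ branches. The robber then moves, every single turn, to the neighbor $u_j$ whose branch weight $W_j$ is minimum. Since $W_j\le W/q$, and one exchange of moves inflates the weights in branch $j$ by a factor of at most $r$ (the distance $\rho$ drops by at most $2$), while every cop outside branch $j$ has its weight reset to $1$ (by the girth condition its geodesic to the new position passes behind the robber, or its distance is at least $2t-1$), the potential satisfies $W'\le rW_j+K\le\bigl(1-\tfrac1t\bigr)W+K$. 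This is a genuine contraction, and the hypothesis $K\le\frac{1}{et}q^t$ together with $\frac1e<\bigl(1-\tfrac1t\bigr)^{t-1}$ gives $K<\tfrac1t\,qr^{t-1}$, so the invariant $W<qr^{t-1}$ propagates from one state to the next; that invariant in turn guarantees $W_j<r^{t-1}$, i.e.\ no cop lies in $N[u_j]$. In short, the $\bigl(1-\tfrac1t\bigr)$ factor you hoped to extract from phase bookkeeping is instead built into the weight function itself, which is precisely what makes the one-step greedy argument that you correctly rejected (with base $q$) actually work (with base $r$). Without this modification, or an actual proof of your per-step loss bound, the argument does not go through.
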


Our bound is of the form $\Omega(g^{-1}(\delta-1)^{g/4})$, which is asymptotically superior to the afore-mentioned bound of Frankl \cite{FranklGirth}, which is of the form $\Omega((\delta-1)^{g/8})$. One may naturally ask if the coefficient $\frac{1}{4}$ of $g$ is best possible. While we do not have an answer to this question, we note that by assuming two well-known conjectures, we would be able to argue that $\frac{1}{4}$ is best possible. 

The following fundamental conjecture about cop number appeared in a paper by Frankl \cite{FranklGirth} where it was referred to as a personal communication to the author by Henri Meyniel in 1985 (see also \cite{Baird}).

\begin{conjecture}[Meyniel's Conjecture]
\label{conjMeyniel}
For every connected graph $G$ of order $n$, $c(G)\le O(\sqrt{n}\,)$.
\end{conjecture}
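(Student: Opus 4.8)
The honest starting point is that Conjecture~\ref{conjMeyniel} is a central open problem: no proof is known, and the best upper bounds to date are of the form $n^{1-o(1)}$, still far from $O(\sqrt n)$. So rather than a complete proof, the plan is to describe the strategy behind the strongest partial results — the natural line of attack — and to pinpoint where it stalls. Within this paper the conjecture is \emph{assumed} rather than established, and is invoked (together with the high-girth constructions developed here) to argue that the exponent $\tfrac14 g$ in our lower bound is best possible.

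The natural approach is a \emph{territory-shrinking} argument. One maintains the invariant that the robber is confined to a connected region $R \subseteq V(G)$, and tries to show that $k = O(\sqrt n)$ cops can force $|R|$ to shrink by a constant factor each phase, so that after $O(\log n)$ phases the robber is trapped. The engine of each phase is the Aigner--Fromme guarding lemma: a single cop can, after finitely many moves, permanently guard an isometric path $P$, meaning that from some point on the robber is captured if he ever steps onto $P$. A team occupying a separating structure for $R$ can thereby split $R$ and confine the robber to the smaller piece.

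The quantitative heart of the matter is a \emph{covering lemma}: one wants a set $W$ of about $\sqrt n$ vertices, with an associated family of guarded paths, whose guarding separates $V(G)$ into controllable pieces (each of size at most, say, $n/2$, or of controllable second neighborhoods). The standard way to produce such a $W$ is probabilistic: sample $\Theta(\sqrt n)$ vertices at random and bound, by an averaging argument, the expected amount of territory left uncovered by the cops' reach. Pushing this through carefully yields the best known bounds of shape $n \cdot 2^{-(1+o(1))\sqrt{\log_2 n}}$, due to Lu--Peng, Scott--Sudakov, and Frieze--Krivelevich--Loh.

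The main obstacle — and the reason the conjecture is still open — is closing the gap between $n^{1-o(1)}$ and $\sqrt n$. The lost factor $2^{\Theta(\sqrt{\log n})}$ comes precisely from the covering/domination step: a random or greedily chosen set of $\sqrt n$ cops need not dominate the relevant neighborhoods when those neighborhoods overlap heavily, and the robber can retreat into exactly such regions of high local overlap. Overcoming this appears to require either a genuinely adaptive placement that reacts to the robber's position each phase, or a structural partition of $V(G)$ into $O(\sqrt n)$ well-separated clusters that no purely probabilistic argument is known to produce. I expect this covering step, and not the guarding mechanics, to be where any serious attempt must make its real advance.
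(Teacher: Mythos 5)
You are right that Conjecture~\ref{conjMeyniel} is an open problem with no proof in the paper: it is stated only as a conjecture and invoked as an assumption (together with the folklore Moore-type conjecture) to argue that the exponent $\tfrac14 g$ in Theorem~\ref{thmWeight} cannot be improved, so there is nothing in the paper for your account to diverge from, and your refusal to fabricate a proof is the correct response. Your summary of the state of the art is accurate — the Aigner--Fromme guarding lemma, the probabilistic covering step, and the $n\cdot 2^{-(1+o(1))\sqrt{\log_2 n}}$ bounds of Lu--Peng, Scott--Sudakov, and Frieze--Krivelevich--Loh — and it is consistent with the paper's own partial result (Corollary~\ref{corWeak}), which establishes only the weak form of the conjecture, and only for bounded-degree expanders, precisely because the expansion hypothesis supplies the covering/domination step you identify as the bottleneck.
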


In \cite{Bollobas}, Bollob\'{a}s and Szemer\'{e}di conjectured that for every sufficiently large $g$, there exists a cubic graph of girth $g$ with $\Theta(2^{(\frac{1}{2}+o(1))g})$ vertices. The corresponding speculation for every fixed degree $d$ is also a folklore conjecture that is related to the Moore bound.

\begin{conjecture}[Folklore]
\label{conjMoore}
For any positive integers $g$ and $d$, there exists a $d$-regular graph $G$ of girth at least $g$, and order $\Theta((d-1)^{(\frac{1}{2}+o(1))g})$.
\end{conjecture}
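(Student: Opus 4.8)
The plan is to construct, for each $d$ and $g$, a $d$-regular graph of girth at least $g$ whose order is as close as possible to the Moore bound. Recall that the Moore bound forces every $d$-regular graph of girth $g$ to have at least $(d-1)^{(\frac{1}{2}-o(1))g}$ vertices, so the conjectured order $(d-1)^{(\frac{1}{2}+o(1))g}$ is the smallest conceivable up to the lower-order term in the exponent; the whole difficulty lies in matching the exponent $\tfrac{1}{2}$ from above. I would attack this along two complementary routes, a probabilistic deletion argument and an explicit algebraic construction.

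First I would run the probabilistic method on the configuration model. In a uniformly random $d$-regular graph on $n$ vertices the number of cycles of length $\ell$ is asymptotically Poisson with mean of order $(d-1)^{\ell}/(2\ell)$, so the expected number of cycles shorter than $g$ is of order $(d-1)^{g}/g$, dominated by the near-extremal lengths. Taking $n$ a suitable constant multiple of this quantity, deleting one vertex from each short cycle, and then restoring $d$-regularity by a local surgery in the spirit of Erd\H{o}s and Sachs yields a $d$-regular graph of girth at least $g$ on $\Theta(n)$ vertices. Unfortunately this produces only order $(d-1)^{(1+o(1))g}$, i.e.\ exponent $1$: short cycles are far too abundant for deletion to be efficient, and no refinement of the surgery changes the exponent.

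To do better I would turn to algebraic constructions, where girth is governed by the length of the shortest nontrivial relation in a group. The strongest known examples are the Ramanujan graphs of Lubotzky, Phillips and Sarnak and of Margulis: for suitable primes $d-1$ these are $d$-regular Cayley graphs whose girth satisfies $g \ge (\tfrac{4}{3}-o(1))\log_{d-1} n$, equivalently order $n = \Theta((d-1)^{(\frac{3}{4}+o(1))g})$. This attains exponent $\tfrac{3}{4}$, comfortably below the probabilistic exponent $1$ but still a constant factor above the target $\tfrac{1}{2}$. The natural next step would be to engineer new Cayley graphs, or quotients of the $d$-regular tree by other arithmetic lattices, whose expansion forces every nontrivial closed walk to have length essentially $2\log_{d-1} n$, thereby saturating the Moore tree.

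The step I expect to be the genuine obstacle is exactly this last one: passing from exponent $\tfrac{3}{4}$ (or $1$) down to $\tfrac{1}{2}$. This demands graphs whose girth is asymptotically twice $\log_{d-1} n$, that is, graphs that look like the infinite $d$-regular tree out to depth $(\tfrac{1}{2}-o(1))\log_{d-1} n$ while remaining globally as sparse as the tree itself, and no probabilistic, spectral, or number-theoretic method is presently known to produce such graphs for all $d$ and $g$. This gap between the construction record $\tfrac{3}{4}$ and the Moore target $\tfrac{1}{2}$ is precisely why the statement is recorded here as a folklore conjecture rather than a theorem, and I would expect any genuine resolution to require a fundamentally new construction, most likely resting on an arithmetic input deeper than the Ramanujan property.
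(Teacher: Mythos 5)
Your final paragraph reaches the right conclusion, and it is the only conclusion available: what you were asked to prove is not a theorem of the paper at all. The statement is recorded there as Conjecture~\ref{conjMoore}, a folklore conjecture (the general-degree analogue of the Bollob\'as--Szemer\'edi conjecture for cubic graphs), and the paper supplies no proof of it. The authors use it purely conditionally: assuming both it and Meyniel's Conjecture~\ref{conjMeyniel}, the exponent $\frac{1}{4}$ in Theorem~\ref{thmWeight} would be best possible. So there is no paper proof to compare your attempt against, and your writeup should be assessed as what it is --- an accurate survey of why the conjecture is open, not a proof. The genuine gap is the one you name yourself: no known construction, probabilistic, spectral, or arithmetic, produces $d$-regular graphs of girth $(2-o(1))\log_{d-1} n$ for all $d$ and $g$, and any argument that stops short of exhibiting such graphs cannot establish the stated order $\Theta\bigl((d-1)^{(\frac{1}{2}+o(1))g}\bigr)$.

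On the factual content of your survey: the Moore bound indeed gives the matching lower bound $(d-1)^{(\frac{1}{2}-o(1))g}$ on the order, so the conjecture asserts near-optimality from above; the configuration-model deletion argument (Erd\H{o}s--Sachs style) gives exponent $1$; and the LPS Ramanujan graphs --- the same family $X^{p,q}$ the paper exploits in Theorem~\ref{thm:3/8}, with girth $g \geq \frac{4\log q}{\log p} - 1$ and $n = q(q^2-1)$, hence $n \le (d-1)^{(\frac{3}{4}+o(1))g}$ --- hold the record exponent $\frac{3}{4}$, and only for the special degrees $d = p+1$ with $p \equiv 1 \bmod 4$ prime. All of this is correct, and it is worth noting the pleasant symmetry: the very family that witnesses the record toward this conjecture is the family the paper uses to prove its \emph{upper} bound $c \le \frac{3}{8}$ on the girth exponent of cop number. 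One caution: your proposal reads at times as if exponent $\frac{1}{2}$ were merely an engineering problem awaiting a cleverer Cayley graph; the conjecture could in principle be false, and nothing in the paper or in your argument rules that out.
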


If we assume that a family of graphs with $\Theta((d-1)^{(\frac{1}{2}+o(1))g})$ vertices exists, and if we assume that Meyniel's conjecture holds for this family of graphs, then this family of graphs must have cop number $O((d-1)^{(\frac{1}{4}+o(1))g})$. It would then follow that for any lower bound of the form $\Omega((d-1)^{c g})$ for the cop number of all graphs of girth $g$ and minimum degree $d$, $c \leq \frac{1}{4}$, making our lower bound of $\frac{1}{4}$ best possible. 

We use the same techniques as in the proof of Theorem \ref{thmWeight} to show that a similar lower bound applies to graphs of high girth with a certain growth condition, even when the minimum degree may be small. We state this lower bound in Theorem \ref{thmGrowth}. In Section \ref{sectionDirected}, we apply the techniques of Section \ref{sectionUndirected} to directed graphs. The corresponding statements are given as Theorems \ref{thmOutDeg} and \ref{thmFrankl}. An important observation from our proof of Theorem \ref{thmOutDeg} is that it is not girth that is important for the cop number of a digraph, but rather the novel notion of the ``trap distance'' $\rho^*(u,v)$ between the vertices of a digraph.

Finally, we compute an upper bound on the cop number of a family of Ramanujan graphs in terms of their degree $d$ and girth $g$. We state this bound in Theorem \ref{thm:3/8}. This upper bound, which is of order $O((d-1)^{(\frac{3}{8}+o(1))g})$, compares quite favorably with our lower bound in Theorem~\ref{thmWeight}. The proof method used in establishing this upper bound works more generally for arbitrary expander graphs.

\section{Lower bounds for undirected graphs}
\label{sectionUndirected}

In this section, we derive general lower bounds on the cop number of graphs of high girth. This bound significantly improves the previous best lower bound of Frankl \cite{FranklGirth}, which was obtained in 1987. 

\begin{proof}[Proof of Theorem \ref{thmWeight}]
If $t = 1$, then the theorem follows from a result of Aigner and Fromme which states that for graphs of girth at least $5$, the cop number of a graph is at least its minimum degree \cite{Aigner}. Otherwise, we assume that $t \geq 2$.

We will show that under the stated girth and degree conditions, the robber has a winning strategy, provided that the number of cops is at most $\frac{1}{e\,t} (\delta-1)^{t}$. We observe that our girth condition implies that for any two vertices $u,v$ at distance at most $2t$, there is a unique geodesic joining $u$ to $v$. 

We begin the game on the robber's move with all cops on a single vertex $v_0 \in V(G)$. (In fact, we view the first move of the robber as part of his choice of the initial position.) We let the robber begin at a vertex $v_1$ adjacent to $v_0$. To show that the cops have no winning strategy on $G$, it suffices to show that the cops cannot win from this position. At any given state $s$ of the game with robber to move, we assume that the robber is at a vertex $v_s$ and that he will move to a neighboring vertex $v_{s+1}$ different from $v_{s-1}$ (and different from $v_s$). After the cops move, we will reach the next state $s+1$.  We will show that the robber can move to a vertex $v_{s+1}$ for which no cop is positioned at a vertex in the closed neighborhood $N[v_{s+1}]$. Thus the robber will be able to avoid capture during state $s$, and hence forever. 

We define the following values. Let $q=\delta-1$, $r=(1-\frac{1}{t})q$, and let $K \leq \frac{1}{e\,t} q^{t}$ denote the number of cops. The lower bound of the theorem is obviously true if $\delta \le 2$. Thus we may assume that $\delta\ge3$.

Suppose that the game is at a state $s\geq 1$. Let $u_1,\dots, u_q$ be distinct neighbors of $v_s$ that are different from $v_{s-1}$. (There may be additional neighbors of $v_s$ if $\deg(v_s)>\delta$, but we do not need them for our strategy.) For each $i=1,\dots,q$, let ${\mathcal C}_i$ be the set of cops $C$ such that $\dist(C,v_s) \leq 2t $ and the geodesic from $C$ to $v_s$ passes through $u_i$. (Note that since $C$ is not at $u_i$, such a geodesic cannot pass via $v_{s-1}$.) For such a cop $C\in \bigcup_{i = 1}^q {\mathcal C}_i$, we let $\rho = \dist(C,v_s)$, and we define the \textit{weight} of $C$ as $w(C) = r^{t-\lceil \rho/2\rceil}$. We let $k$ be the number of cops that are in none of the sets ${\mathcal C}_1,\dots,{\mathcal C}_q$, and we define the weight of each such cop to be equal to $1$. Let $W_i = \frac{k}{q} + \sum_{C\in {\mathcal C}_i} w(C)$, and let $W$ be the total weight of all cops. Note that $W = \sum_{i=1}^q W_i$. 

At the beginning of each state $s$, the robber selects a neighbor $u_j$ for which $W_j$ is minimum and moves to $u_j$. If $W_j < r^{t-1}$, then neither $u_j$ nor any of its neighbors contains a cop, so $u_j$ is a safe vertex for the robber. In order to prove that the robber is never captured, it suffices to prove that $W < qr^{t-1}$, since in that case, $W_j < r^{t-1}$ for some $1 \leq j \leq q$.

Initially, ${\mathcal C}_1,\dots,{\mathcal C}_q$ are all empty (since all cops are at $y_1$ and the girth of $G$ is greater than $2t+1$), and hence all cops have weight $1$. Thus $W < qr^{t-1}$ as long as we have fewer than $qr^{t-1}$ cops. It is easy to see by applying the inequality $\frac{1}{e} < (1-\frac{1}{t})^{t-1}$ that this is the case: 
\begin{equation}
\label{eq:1}
   K \leq \frac{1}{e\,t} q^{t} < \frac{(1-\frac{1}{t})^{t-1}}{t} \, q^{t} = \frac{qr^{t-1}}{t} < qr^{t-1}.
\end{equation}

The rest of the proof is by induction. We assume that at the current state $s$ we have $W < qr^{t-1}$, and we let $W'$ denote the total weight of the cops after the robber has moved to $v_{s+1}=u_j$ and state $s$ has ended. For every cop $C \in {\mathcal C}_j$, the robber moved closer to $C$, and $C$ may have moved closer to the robber; thus the distance $\rho$ between $C$ and the robber may have decreased by at most $2$. Consequently, the new weight $w'(C)$ is at most $rw(C)$. By the girth condition of $G$, at the end of state $s$, for any other cop $C'$ that was not in $\mathcal{C}_j$, either the geodesic from $C'$ to $v_{s+1}$ passes through $v_s$, or $\dist(C', v_{s+1}) \geq 2t - 1$. In both cases, $C$' has a weight equal to $1$ at the end of state $s$.  Then, using (\ref{eq:1}), we see that
\begin{eqnarray}
   W' & \leq & r\cdot W_j + K \nonumber \\
   & < & r\cdot \frac{W}{q} + \frac{qr^{t-1}}{t} \nonumber \\
   & < & r^{t} + \frac{qr^{t-1}}{t} \label{eq:W'} \\
   & = & qr^{t-1} \Bigl(1 - \frac{1}{t} + \frac{1}{t}\Bigr) = qr^{t-1}.\nonumber
\end{eqnarray}
This completes induction and the proof.
\end{proof}

Theorem \ref{thmWeight} gives a lower bound for the cop number of a graph of girth $g$ and minimum degree $\delta$ of the form $\Theta (g^{-1} (\delta - 1)^{g/4})$, which is a significant improvement over Frankl's lower bound from \cite{FranklGirth}, which is of the form $\Theta((\delta - 1)^{g/8})$. Furthermore, the method of Theorem \ref{thmWeight} extends to graphs which have a fast growth property, which we define next. 

Given positive integers $h,q$, we say that a graph $G$ has \emph{$(h,q)$-growth} if for any vertex $v \in V(G)$ and any neighbor $u$ of $v$, the number of vertices $w$ satisfying $\dist(w,v)=h$ and $\dist(w,u) \ge h$ is at least $q$. We observe that a graph of minimum degree $\delta$ has $(1,\delta - 1)$-growth. The following theorem, stated in terms of growth, extends Theorem \ref{thmWeight}. The proof of the theorem uses ideas similar to Frankl's method in \cite{FranklGirth}. 

\begin{theorem}
Let $t\geq 1$, $h \ge1$, and $q\ge1$ be integers, and let $G$ be a graph of girth $g \geq 4h(t+1) - 3$ and with $(h,q)$-growth. Then $c(G) > \frac{1}{e\,t}\, q^{t}$. 
\label{thmGrowth}
\end{theorem}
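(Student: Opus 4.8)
The plan is to run the robber strategy from the proof of Theorem~\ref{thmWeight} almost verbatim, replacing single steps by ``rounds'' of $h$ consecutive moves and rescaling all distances by a factor of $h$. Keep $q$ from the $(h,q)$-growth hypothesis, set $r=(1-\tfrac1t)q$, and let $K\le\tfrac{1}{e\,t}q^t$ be the number of cops. The robber again travels along a ray of the (locally unique) geodesic tree around his position: at a decision state $s$ he sits at $v_s$, having entered through the neighbor $u$ of $v_s$ lying on the geodesic back to his previous resting vertex $v_{s-1}$ at distance $h$. By $(h,q)$-growth there are at least $q$ vertices $w_1,\dots,w_q$ with $\dist(w_i,v_s)=h$ and $\dist(w_i,u)\ge h$; these are the robber's $q$ candidate destinations for the next round, and the girth bound guarantees that each is joined to $v_s$ by a unique geodesic avoiding $u$.

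First I would port the weight bookkeeping. For a cop $C$ with $\rho:=\dist(C,v_s)\le 2ht$ whose geodesic to $v_s$ runs in the direction of some $w_i$, put $C\in\mathcal C_i$ and assign weight $w(C)=r^{\,t-\lceil \rho/(2h)\rceil}$; every remaining cop gets weight $1$, and with $k$ such leftover cops set $W_i=\tfrac{k}{q}+\sum_{C\in\mathcal C_i}w(C)$ and $W=\sum_i W_i$. The robber moves toward a destination $w_j$ minimizing $W_j$. During one round the robber advances $h$ steps toward a cop of $\mathcal C_j$ while that cop advances at most $h$ steps, so $\rho$ drops by at most $2h$, hence $\lceil\rho/(2h)\rceil$ drops by at most $1$ and each such weight is multiplied by at most $r$, while every other cop is reset to weight $1$. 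This yields exactly the recurrence $W'\le r\,W_j+K$ of the previous proof, and since $r=(1-\tfrac1t)q$ the computations \eqref{eq:1} and \eqref{eq:W'} carry over unchanged, giving $W<qr^{t-1}$ at every state and hence $\min_j W_j\le W/q<r^{t-1}$. It then remains to convert ``$W_j<r^{t-1}$'' into safety.

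The heart of the matter, and what I expect to be the main obstacle, is the safe-traversal step: showing that a destination $w_j$ with $W_j<r^{t-1}$ can actually be reached over the $h$ intermediate turns without the robber ever being caught. Unlike in Theorem~\ref{thmWeight}, a round now lasts $h$ turns, during which the cops also move $h$ times and may try to intercept the robber in mid-route; and, crucially, the $q$ growth-targets need not emanate from distinct neighbors of $v_s$, so two routes can share a long initial segment and a single cop on such a segment threatens several destinations at once. The governing computation is that a cop able to intercept the geodesic $v_s\to w_j$ must lie in direction $j$ at distance at most $2h$ from $v_s$ (it would meet the robber at the midpoint of its approach, at the vertex $x_{\rho/2}$, which lies on the route precisely when $\rho\le 2h$); any such cop has $\lceil\rho/(2h)\rceil=1$ and therefore weight $r^{t-1}$. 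So, as long as every potential interceptor of the $w_j$-route is genuinely charged inside $W_j$, the bound $W_j<r^{t-1}$ forbids all of them and the round is safe.

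Thus the decisive work is to define the classes $\mathcal C_i$ (following Frankl's treatment of branching geodesics) so that each cop is charged to a single destination and, in particular, every interceptor of the route to $w_j$ is charged to $w_j$ rather than to a route sharing an initial segment with it. This is exactly where the girth hypothesis $g\ge 4h(t+1)-3$ is consumed: it is just strong enough to make the ball of radius $\approx 2h(t+1)$ about any resting vertex a tree, so that all geodesics we track --- up to distance $2ht$, plus the $O(h)$ slack needed for the weight reset after a round --- are unique and branch cleanly. Granting the safe-traversal claim, the induction closes as before (the base case again being that all cops start on one vertex, whence every $\mathcal C_i$ is empty and $W=K<qr^{t-1}$): the robber evades capture forever whenever $K\le\tfrac{1}{e\,t}q^t$, which is the desired bound $c(G)>\tfrac{1}{e\,t}q^t$.
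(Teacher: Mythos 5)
Your bookkeeping ports correctly --- the classes $\mathcal{C}_i$, the averaging $\min_j W_j \le W/q$, and the recurrence $W' \le r\,W_j + K$ all survive the rescaling by $h$ --- and you have correctly located the crux: safe traversal of the $h$ intermediate turns when geodesic routes can branch. But the proposal leaves exactly that crux unproven, and the plan you sketch for it cannot work as stated. You want disjoint classes such that \emph{every} interceptor of the route to $w_j$ is charged inside $W_j$. A cop sitting on (or branching one step off) a common initial segment of the routes to $w_1$ and $w_2$ is a potential interceptor of both; since each cop may be charged to only one class (disjointness is precisely what makes $\min_j W_j \le W/q$ valid), whichever class it lands in, the other route's weight can be below $r^{t-1}$ while that cop still intercepts it. So ``$W_j < r^{t-1}$ forbids all interceptors of route $j$'' is unobtainable by any partition-based charging. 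The paper resolves this differently: it never charges mid-route interceptors at all. Instead it maintains an explicit invariant, proved inductively alongside $W < qr^{t-1}$: at the start of every state, no cop lies within distance $2h-2$ of $v_s$ unless its geodesic to $v_s$ passes through $y_s$, the vertex the robber just vacated. Together with the girth bound this rules out mid-route interception outright (cops behind $y_s$ would have to traverse a cycle of length at least $g$; all other cops start at distance at least $2h-1$ and so cannot reach the robber strictly before $v_{s+1}$), and the classes $\mathcal{C}_i$ need only contain cops whose geodesic to $v_s$ passes through $u_i$; cops branching off a route partway are left uncharged, and the girth condition guarantees they still have weight $1$ at the next state.

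Separately, even granting a perfect charging scheme, your weight function is calibrated one step too coarsely to close the induction. With $w(C) = r^{\,t-\lceil \rho/(2h)\rceil}$, the bound $W_j < r^{t-1}$ only pushes the cops of $\mathcal{C}_j$ beyond distance $2h$; after the robber's $h$ moves and the cops' $h$ replies, such a cop can stand adjacent to $v_{s+1}$, and nothing in your setup prevents capture early in the next round --- a large weight does not stop a cop that is already next to the robber. The paper's weight is $w(C) = r^{\,t+1-\lceil (\rho+2)/(2h)\rceil}$, which equals $r^{t-1}$ for all $\rho \le 4h-2$; hence $W_j < r^{t-1}$ forces every cop of $\mathcal{C}_j$ out to distance at least $4h-1$, so that after the worst-case closing of $2h$ it is still at distance at least $2h-1$ from $v_{s+1}$ --- exactly what is needed to re-establish the invariant and make the next round safe. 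The two missing ingredients, then, are the invariant itself and the shifted weight that is strong enough to restore it.
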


\begin{proof}
If $t = 1$, then a method of Frankl shows that a graph with girth $8h - 3$ and $(h,q)$-growth has cop number greater than $q$ \cite{FranklGirth}. While Frankl only shows that the cop number is greater than $(\delta - 1)^h$, where $\delta$ is the graph's minimum degree, this more general bound in terms of growth follows immediately from his method. Hence, we assume that $t \geq 2$.

We show that under these girth and growth conditions, the robber has a winning strategy, provided that the number of cops is at most $\frac{1}{e\,t} q^{t}$. We observe that our girth condition implies that for any two vertices $u,v$ at distance at most $2h(t+1)-2$, there is a unique geodesic joining $u$ to $v$. 

We begin the game on the robber's move with all cops on a single vertex $y_1 \in V(G)$. We let the robber begin at a vertex $v_1$ adjacent to $y_1$. To show that the cops have no winning strategy on $G$, it suffices to show that the cops cannot win from this position. At any given state $s$ of the game with robber to move, we assume that the robber is at a vertex $v_s$, and that on the previous move, the robber occupied a vertex $y_s$ (except when $s = 1$, in which case $y_1$ is defined separately). At this point, we let the robber select a geodesic path $P_s$ of length $h$ from $v_s$ to a new vertex $v_{s+1}$ that does not pass through $y_s$. The growth condition implies that there are at least $q$ candidates for the vertex $v_{s+1}$. The robber and cops will then take turns moving, making a total of $h$ moves each, and on each move, the robber will move along the path $P_s$ toward $v_{s+1}$. After the robber and cops have each made $h$ moves, we reach the state $s+1$.  We will show that the robber is able to avoid capture during state $s$, and hence forever.

We will assume a stronger condition at the beginning of each state of the game. We will assume that at the beginning of each state $s$, no cop is positioned at a vertex $c$ for which $\dist(c,v_{s}) \leq 2h-2$, except when the geodesic from $c$ to $v_{s}$ passes through $y_s$. The intuition behind this condition is that we will always let our robber move away from $y_s$, so a cop whose geodesic to $v_s$ passes through $y_s$ poses no immediate threat to the robber due to the large girth of $G$. It follows from this assumption that no cop at a vertex $c$ with distance less than $2h-1$ to the robber at the beginning of state $s$ can decrease its distance to the robber while the robber is moving along $P_s$ toward a vertex $v_{s+1}$. All other cops will have a distance of at least $2h - 1$ from $v_s$. This ensures that the robber will always be able to travel along $P_s$ and reach $v_{s+1}$ before being captured. (Here we do not exclude the possibility that the robber is captured when he arrives to $v_{s+1}$.) We have chosen our game's initial configuration to satisfy this condition, and we will show by induction that this condition may be satisfied at the end of each state $s$.

We let $r=(1-\frac{1}{t})q$, and we let $K \leq \frac{1}{e\,t} q^{t}$ denote the number of cops.

Suppose that the game is at state $s\geq 1$. Let $u_1,\dots, u_q$ be distinct vertices at distance exactly $h$ from $v_s$ and at distance at least $h$ from $y_{s}$. (There may be additional such vertices, but we do not need them for our strategy.) For each $i=1,\dots,q$, let ${\mathcal C}_i$ be the set of cops $C$ such that $\dist(C,v_s) \le 2h(t+1)-2$ and $\dist(C,u_i) = \dist(C,v_s) - h$. For such a cop $C\in \bigcup_{i = 1}^q {\mathcal C}_i$, we define its \emph{weight} 
$$w(C) = r^{t+1-\lceil \frac{\rho+2}{2h}\rceil}$$
where $\rho = \dist(C,v_s)$.
Note that each cop in some set ${\mathcal C}_i$ has a unique geodesic path to $v_s$, and that geodesic passes through $u_i$, but not through $y_s$. Thus, ${\mathcal C}_1,\dots,{\mathcal C}_q$ are pairwise disjoint. 
We let $k$ be the number of cops that are in none of the sets ${\mathcal C}_1,\dots,{\mathcal C}_q$, and we define the weight of each such cop to be equal to $1$. Let $W_i = \frac{k}{q} + \sum_{C\in {\mathcal C}_i} w(C)$, and let $W$ be the total weight of all cops. Then $W = \sum_{i=1}^q W_i$. 

The robber selects the vertex $u_j$ for which $W_j$ is minimum and moves to $u_j$ in $h$ moves. As each cop $C$ whose geodesic to $v_s$ does not pass through $y_s$ is at a distance of at least $2h-1$ from $v_s$, the robber will not be captured before reaching $u_j$. On the other hand, for any cop $C'$ whose geodesic to $v_s$ passes through $y_s$, $C'$ will not be able to capture the robber within $h$ moves due to the girth condition of $G$. Furthermore, if $W_j < r^{t-1}$ when the robber is at $v_s$, then no cop $C \in \mathcal{C}_j$ is within distance $4h - 2$ of $v_s$. Therefore, the robber may safely reach $v_{s+1} = u_j$, and after reaching $v_{s+1}$, no cop $C$ will satisfy $\dist(C,v_{s+1}) \leq 2h - 2$, except when the geodesic from $C$ to $v_{s+1}$ passes through $y_{s+1}$ (the vertex adjacent to $v_{s+1}$ on $P_s$). Hence, in order to prove that no cop $C$ ever comes within distance $2h - 2$ of a vertex $v_s$, except when the geodesic from $C$ to $v_s$ passes through $y_s$, it suffices to prove that $W < qr^{t-1}$, since in that case $W_j < r^{t-1}$ for some $1 \leq j \leq q$.

Initially, ${\mathcal C}_1,\dots,{\mathcal C}_q$ are all empty (since all cops are at $y_1$ and the girth of $G$ is greater than $2h(t+1) + h - 1$), and hence all cops have weight $1$. Therefore, as in the method of (\ref{eq:1}),  $W < qr^{t-1}$.

The rest of the proof is by induction. Let us assume that at the current state $s$ we have $W < qr^{t-1}$, and let $W'$ denote the total weight after the robber has moved to $v_{s+1}=u_j$ and state $s$ has ended. For every cop $C \in {\mathcal C}_j$, the robber moved closer to $C$ exactly $h$ times, and $C$ may have moved closer to the robber $h$ times; thus the distance $\rho$ between $C$ and the robber may have decreased by at most $2h$. Consequently, the new weight $w'(C)$ is at most $rw(C)$. For each cop $C \not \in \bigcup_{i = 1}^q \mathcal{C}_i$, the shortest path from the new position of $C$ to $v_{s+1}$, not passing through $y_{s+1}$, is of length at least $2ht - 1$. Hence, the new weight of $C$ is still $w'(C) = 1$. Let us finally consider a cop $C\in {\mathcal C}_i$, where $i\ne j$. By the girth condition of $G$, at the beginning of state $s +1$, a shortest  path from $C$ to $v_{s+1}$ not passing through $y_{s+1}$ is of length $\rho \geq 4h(t+1)-3 - (2h(t+1)-2+ 2h)=2ht-1$, since during state $s$ the cop $C$ could move at most $h$ steps towards $v_{s+1}$. Therefore  the cop $C$ has a new weight $w'(C) = 1$. 

Therefore, we obtain that $W'\leq  r\cdot W_j + K < qr^{t-1}$ by using the method of (\ref{eq:W'}). This completes the proof.
\end{proof}

Theorem \ref{thmGrowth} may be applied in certain situations in which Theorem \ref{thmWeight} is not useful, namely when graphs of high girth have fast growth but low minimum degree. An example of such a graph may be obtained from a graph of high girth and high minimum degree by subdividing each edge approximately the same number of times.

\section{Lower bounds for digraphs}
\label{sectionDirected}

The aim of this section is to show that the techniques of the previous section for undirected graphs may also be applied to digraphs. In order to do so, we will define the \textit{dispersion} of a digraph, which is, in some sense, a directed counterpart of the girth of an undirected graph. Roughly speaking, we do not want short cycles in the underlying undirected graph that are composed of at most four directed geodesic paths, each of which is short. The precise definition of dispersion requires some preparation.

Let $G$ be a digraph. By $vu\in E(G)$ we denote the directed edge from $v$ to $u$. A subgraph of $G$ consisting of two oppositely directed edges $uv,vu\in E(G)$ is called a \emph{digon}. For vertices $v,u\in V(G)$, we define the \emph{distance from $v$ to $u$}, denoted by $\dist(v,u)$, as the length of a shortest directed path from $v$ to $u$. 
A directed path from $v$ to $u$ is \emph{geodesic} if its length is equal to $\dist(v,u)$. A subdigraph of $G$ consisting of two internally disjoint geodesic paths $P$ and $Q$ is a \emph{$(v,u)$-trap} if $P$ is a directed $(v,x)$-path with at least one edge, $Q$ is a directed $(u,x)$-path and $\Vert Q\Vert\le \Vert P\Vert$ (by $\Vert\cdot\Vert$ we denote the length of the path). The intercept $x$ is called the \emph{tip} of the trap. The length of $P$ is called the \emph{length} of the trap. Note that $\Vert P\Vert = \dist(v,x)$ and $\Vert Q\Vert = \dist(u,x)$. We allow for $u=v$, but in this case we require that $P$ and $Q$ each have at least two edges. Note that in this case, $P$ and $Q$ have the same length. We also allow that $u=x$. 

Let $t\ge 1$ be an integer. We say that a digraph $G$ is \emph{$t$-dispersed} if the following conditions hold:
\begin{enumerate}
    \item For every $v,u\in V(G)$ ($v\ne u$), $G$ has no two internally disjoint $(v,u)$-traps, each of length at most $t$.
    \item If $uv\in E(G)$, then $G$ has no $(v,u)$-traps of length at most $t$.
\end{enumerate}

Forbidden subdigraphs used in the definition of $t$-dispersed property are illustrated in Figure \ref{fig:dispersed}. Note that this property forbids certain cycles in $G$ that are composed of 2, 3, or 4 geodesic directed paths of restricted length.

\begin{figure}
\centering
\includegraphics[width=0.85\textwidth]{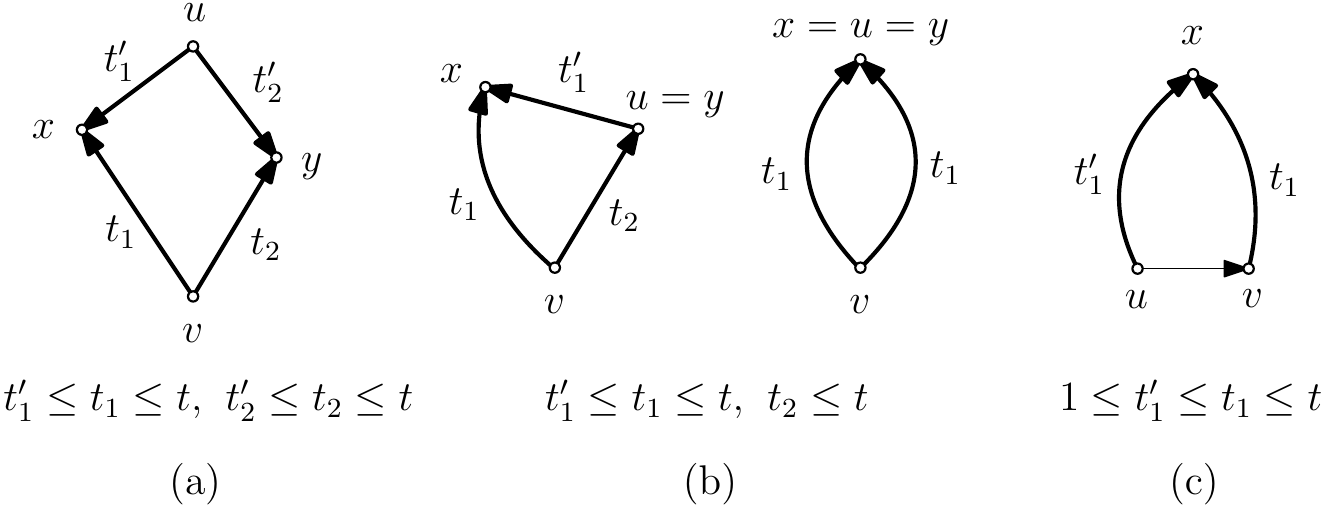}
\caption{Excluded traps in $t$-dispersed graphs. All thick paths shown are geodesics.  (a) Internally disjoint $(v,u)$-traps of lengths $t_1,t_2\le t$. (b) The degenerate versions when $u$ coincides with one or both tips of the trap. (c) $(v,u)$-trap of length $t_1\le t$ when $uv\in E(G)$.}
  \label{fig:dispersed}
\end{figure}

\begin{lemma}\label{lem:trap1}
Let $G$ be a digraph that is $t$-dispersed, and let $v,x\in V(G)$ be vertices with $\dist(v,x)\le t$. Then there is a unique $(v,x)$-geodesic, and if $uv\in E(G)$, then $\dist(u,x)>\dist(v,x)$.
\end{lemma}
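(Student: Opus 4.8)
The plan is to prove both claims by contradiction, exploiting the two forbidden configurations in the definition of $t$-dispersed directly. The statement asserts two things: uniqueness of the $(v,x)$-geodesic when $\dist(v,x) \le t$, and a strict distance increase when stepping backward along an in-edge $uv$. I would treat them separately, since each corresponds naturally to one of the two forbidden trap configurations.

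\textbf{Uniqueness of the geodesic.} First I would suppose, for contradiction, that there exist two distinct $(v,x)$-geodesics $P$ and $P'$. Both have length exactly $\dist(v,x) \le t$. Since they are distinct but share the endpoints $v$ and $x$, I would want to extract from them a pair of internally disjoint geodesic paths that witness two $(v,v)$-traps or two $(v,w)$-traps for some vertex $w$, contradicting condition (1). The technical care here is that $P$ and $P'$ may overlap in complicated ways rather than being internally disjoint; the standard fix is to look at the first vertex where they diverge and the first vertex where they reconverge, producing two internally disjoint geodesic subpaths with common endpoints $a$ and $b$. Each of these subpaths is itself geodesic (as a subpath of a geodesic), has length at most $t$, and together they form the two internally disjoint $(a,b)$-traps (in the degenerate $v=u$ form of the definition, with tip $b$) that condition (1) forbids. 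I expect this divergence/reconvergence argument to be the main obstacle, mostly in checking the degenerate cases: that the two subpaths genuinely each have at least two edges when $a = b$ is impossible (they reconverge, so $a \ne b$), and that neither subpath is empty so that both are honest traps of positive length at most $t$.

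\textbf{The distance inequality.} For the second claim, suppose $uv \in E(G)$ and, for contradiction, that $\dist(u,x) \le \dist(v,x)$. Let $P$ be the (now known to be unique) $(v,x)$-geodesic, and let $Q$ be a $(u,x)$-geodesic. Then $P$ is a directed $(v,x)$-path with $\Vert P \Vert = \dist(v,x) \le t$ and at least one edge, $Q$ is a directed $(u,x)$-path with $\Vert Q \Vert = \dist(u,x) \le \Vert P \Vert \le t$, and together they form a $(v,u)$-trap of length at most $t$ with tip $x$. This is exactly the configuration forbidden by condition (2) whenever $uv \in E(G)$. The only subtlety is verifying internal disjointness of $P$ and $Q$; if they share an internal vertex $z$, I would replace them by their sub-paths from $v$ (respectively $u$) to the first shared vertex, which still yield a valid, possibly shorter, trap, so condition (2) is still violated. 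Hence $\dist(u,x) > \dist(v,x)$, completing the proof.

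Overall, the proof is a direct unpacking of the $t$-dispersed definition, and the distance bound $\dist(v,x) \le t$ is used precisely to guarantee that all the traps produced have length at most $t$, which is what brings the forbidden configurations into scope. The one place demanding genuine attention is extracting internally disjoint traps from possibly overlapping geodesics, so I would state the divergence/reconvergence reduction carefully rather than leaving it implicit.
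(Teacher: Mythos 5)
Your proof is correct and follows essentially the same route as the paper's: uniqueness is obtained by passing to divergence/reconvergence points $a,b$ and viewing the two internally disjoint geodesic segments as two internally disjoint $(a,b)$-traps forbidden by condition (1), and the distance inequality is obtained by truncating the $(v,x)$- and $(u,x)$-geodesics at their first common vertex to get a $(v,u)$-trap forbidden by condition (2). One small terminological fix: in the uniqueness step the degenerate form you need is the one where the second vertex coincides with the tip (i.e.\ $u=x=b$, so each segment alone, with trivial second path, is an $(a,b)$-trap), not the ``$v=u$'' form --- condition (1) explicitly requires $v\ne u$, so reading the two segments as a single $(a,a)$-trap would not yield the contradiction.
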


\begin{proof}
Suppose that there are two such geodesics $P$ and $Q$. Then it is easy to see that $P\cap Q$ contains two vertices $a,b$ such that the segments on the paths from $a$ to $b$ are internally disjoint (and of length at least 2 since we do not have double edges). These segments would form two internally disjoint $(a,b)$-traps of length at most $t$, contradicting property (1) from the definition of $t$-dispersed. Note that in the two traps, the vertex $b$ is the tip of both traps.

Suppose now that $uv\in E(G)$ and that $\dist(u,x)\le\dist(v,x)$. Consider a $(u,x)$-geodesic $Q$ and let $y$ be the first vertex on it that intersects the $(v,x)$-geodesic $P$. Note that, since we have geodesics, $Q$ does not intersect $P$ at any other vertex between $v$ and $y$. Thus, the union of both geodesics from $v$ to $y$ and from $u$ to $y$ would be a trap of length at most $t$, contrary to property (2) from the definition of $t$-dispersed.
\end{proof}

\begin{lemma}\label{lem:trap2}
Let $G$ be a digraph that is $t$-dispersed, and let $u,v$ be distinct vertices in $G$. Then all $(v,u)$-traps of length at most $t$ contain the same outneighbor of $v$. 
\end{lemma}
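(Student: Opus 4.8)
The plan is to argue by contradiction through a \emph{minimal counterexample}, reducing everything to property~(1) of the definition of $t$-dispersed. Suppose that some pair of distinct vertices admits two $(v,u)$-traps of length at most $t$ whose first edges out of $v$ differ, and among all such configurations (ranging over all ordered pairs of distinct vertices) choose one for which the total number of edges in the two traps is smallest. Write the traps as $T_1 = P_1 \cup Q_1$ and $T_2 = P_2 \cup Q_2$, where $P_i$ is the geodesic from $v$ to the tip $x_i$ and $Q_i$ is the geodesic from the second source to $x_i$, and let $u_1 \ne u_2$ be the outneighbors of $v$ (the first vertices on $P_1$ and $P_2$).

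First I would record two easy consequences of Lemma~\ref{lem:trap1}. Since each $P_i$ has length at most $t$, the geodesic from $v$ to $x_i$ is unique; hence if $x_1 = x_2$ then $P_1 = P_2$, contradicting $u_1 \ne u_2$, so $x_1 \ne x_2$. If $P_1$ and $P_2$ shared a vertex $w \ne v$, their initial segments to $w$ would be two geodesics from $v$ to $w$ and hence equal, again forcing $u_1 = u_2$; thus $P_1 \cap P_2 = \{v\}$. The same uniqueness shows that $v$ lies on neither $Q_i$, since otherwise the terminal segment of $Q_i$ from $v$ to $x_i$ would be a second geodesic from $v$ to $x_i$, which after comparing lengths forces the source of $Q_i$ to equal $v$, a contradiction.

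The heart of the argument is to show that in a minimal counterexample $T_1$ and $T_2$ share no vertex outside $\{v,u\}$; once this is established they are two internally disjoint $(v,u)$-traps of length at most $t$, directly contradicting property~(1). So suppose they share a vertex $w \notin \{v,u\}$. Since $P_1 \cap P_2 = \{v\}$, this $w$ lies in $Q_1 \cap Q_2$, in $P_2 \cap Q_1$, or (symmetrically) in $P_1 \cap Q_2$. If $w \in Q_1 \cap Q_2$, then by geodesic uniqueness $Q_1$ and $Q_2$ agree on a common initial segment from $u$ up to a last common vertex $y$, with $y \ne v$ (as $v \notin Q_1$) and $y \ne u$; trimming off the segment from $u$ to $y$ yields two $(v,y)$-traps, namely $P_1$ together with the subpath $Q_1[y,x_1]$ and $P_2$ together with $Q_2[y,x_2]$, which retain the distinct outneighbors $u_1 \ne u_2$ but use strictly fewer edges, contradicting minimality. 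If instead $w \in P_2 \cap Q_1$, then the subpath $P_2[v,w]$ is a $(v,w)$-trap with tip $w$ and trivial (length-zero) second path, while $P_1$ together with $Q_1[w,x_1]$ is a $(v,w)$-trap with tip $x_1$; these two $(v,w)$-traps again have distinct outneighbors $u_2 \ne u_1$ and strictly fewer edges, contradicting minimality.

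The main obstacle I anticipate is precisely this uncrossing step: one must check in each case that the trimmed paths remain geodesic traps of length at most $t$, that the internal-disjointness and length inequalities defining a trap survive the trimming, that the new base vertex is genuinely distinct from $v$ so the reduced configuration is a legitimate counterexample, and that the degenerate trap with a length-zero second path is admissible (it is, by the clause allowing the second source to coincide with the tip). Granting these verifications, minimality is contradicted in every case, so the two traps must intersect only in $\{v,u\}$, and property~(1) then rules them out, completing the proof.
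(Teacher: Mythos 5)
Your proof is correct, and it shares the paper's skeleton: take a minimal counterexample consisting of two traps with distinct first edges, show the two traps can meet only in $\{v,u\}$, and then contradict property (1) of $t$-dispersedness. Your treatment of the $P_1\cap P_2$ and $Q_1\cap Q_2$ intersections (geodesic uniqueness via Lemma \ref{lem:trap1}, then trimming the common initial segment of the $Q_i$) is also essentially the paper's. Where you genuinely diverge is in the crossing case $w\in P_2\cap Q_1$ and in the minimality measure supporting it. The paper minimizes lexicographically (trap lengths first, then the lengths of $Q$ and $Q'$ as $u$ varies) and resolves the crossing by a distance comparison: from $\Vert P\Vert\ge\Vert Q\Vert=\dist(u,u')+\dist(u',x)$ and $\dist(v,u')+\dist(u',x)>\Vert P\Vert$ it deduces $\dist(u,u')<\dist(v,u')$, forces $u'=x'=u$ by minimality, and then ``repeats the argument.'' You instead minimize a single global quantity (total edge count, ranging over all ordered pairs) and resolve the crossing outright by exhibiting a strictly smaller counterexample at the pair $(v,w)$: the degenerate trap $P_2[v,w]$ with length-zero second path (explicitly allowed by the definition, as the paper itself notes when observing $\rho^*(v,u)\le\dist(v,u)$) against $P_1\cup Q_1[w,x_1]$, which together retain the two distinct first edges out of $v$. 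This buys uniformity, since every case ends in the same ``strictly fewer edges'' contradiction, with strictness guaranteed by $w\ne u$ (respectively $y\ne u$), and it avoids the paper's terse iterative step; the only overhead is the routine verification, which you carry out, that trimmed subpaths remain geodesics, that the trap inequalities and internal disjointness survive trimming, and that the new base vertex is distinct from $v$.
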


\begin{proof}
Suppose that there are two $(v,u)$-traps with geodesics $P\cup Q$ and $P'\cup Q'$, respectively, where the first edge of $P$ and the first edge on $P'$ are different. We may assume that these traps are selected to be of minimum possible lengths and that $u$ is selected so that the lengths of $Q$ and $Q'$ are also minimum. Let $x$ and $x'$ be the tips of the two traps, respectively. Since every subpath of $P$ and $P'$ is a geodesic, Lemma \ref{lem:trap1} implies that $P\cap P' = \{v\}$. Similarly, we see that $Q\cap Q' = \{u\}$ by using the fact that $u$ is selected so that the lengths of $Q$ and $Q'$ are smallest possible. 

Suppose that $Q$ intersects $P'$ in a vertex $u'$. Since $P\cup Q$ is a trap, we have $\Vert P\Vert \ge \Vert Q\Vert = \dist(u,u') +\dist(u',x)$. On the other hand, $\dist(v,u')+\dist(u',x) > \Vert P\Vert$ by Lemma \ref{lem:trap1}. This implies that $\dist(u,u') < \dist(v,u')$. Our minimality choice of traps now implies that $u'=x'$ and also that $u=x'$. By repeating this argument, we end up with the conclusion that $Q$ is internally disjoint from $P'$. Similarly we see that $P$ and $Q'$ are disjoint. Then the two traps are internally disjoint, and their existence contradicts the assumption that $G$ is $t$-dispersed.
\end{proof}

We also define the \emph{trap distance} from $v$ to $u$ as 
$$
    \rho^*(v,u) = \min \{\ell \mid \exists \hbox{ a } (v,u)\hbox{-trap\ of\ length\ } \ell \}.
$$
Observe that $\rho^*(v,u) \le \dist(v,u)$ since any geodesic from $v$ to $u$ is a trap (with the second path of length 0). There is also a way to define the trap distance using distances. Define $S_{\rho}(v)$ (the \emph{sphere of radius $\rho$} around $v$) to be the set of vertices at distance exactly $\rho$ from $v$. Similarly, we define $B_{\rho}(v)$ (the \emph{ball of radius $\rho$} around $v$) to be the set of vertices at distance at most $\rho$ from $v$.

\begin{lemma}\label{lem:sphere intersects ball}
The trap distance $\rho^*(v,u)$ is equal to the minimum value $\rho$ for which $S_\rho(v) \cap B_\rho(u) \neq \emptyset$.
\end{lemma}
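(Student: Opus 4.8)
The plan is to prove the two inequalities separately between $\rho^*(v,u)$ and the quantity $\rho_0 := \min\{\rho : S_\rho(v)\cap B_\rho(u)\neq\emptyset\}$, working under the assumption $u\neq v$ (the diagonal case is degenerate and I would flag it separately, since there the trap definition demands $P,Q$ to have at least two edges, while $v\in S_0(v)\cap B_0(v)$ would have to be discarded as a trivial intersection).

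The inequality $\rho_0\le\rho^*(v,u)$ is immediate from the definition of a trap. If $P\cup Q$ is a $(v,u)$-trap of length $\ell=\rho^*(v,u)$ with tip $x$, then $\dist(v,x)=\Vert P\Vert=\ell$ and $\dist(u,x)=\Vert Q\Vert\le\ell$, so $x\in S_\ell(v)\cap B_\ell(u)$ and hence $\rho_0\le\ell$. The substance of the lemma is the reverse inequality, for which I would start from a witness $x\in S_{\rho_0}(v)\cap B_{\rho_0}(u)$ and \emph{construct} a trap of length at most $\rho_0$ out of arbitrary geodesics. Choose any geodesic $P$ from $v$ to $x$ and any geodesic $Q$ from $u$ to $x$. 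These need not be internally disjoint, so I perform the standard surgery: let $y$ be the first vertex encountered while traversing $Q$ from $u$ that also lies on $P$ (such $y$ exists because $x\in P\cap Q$), and let $P_1,Q_1$ be the sub-geodesics of $P,Q$ running from $v$ and $u$, respectively, to $y$.

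The key computation is the verification of the trap inequality $\Vert Q_1\Vert\le\Vert P_1\Vert$. Since $y$ lies on the geodesic $P$ and on the geodesic $Q$, the segment lengths add along each path, giving $\dist(v,y)+\dist(y,x)=\dist(v,x)=\rho_0$ and $\dist(u,y)+\dist(y,x)=\dist(u,x)\le\rho_0$; subtracting $\dist(y,x)$ yields $\dist(u,y)\le\dist(v,y)$, i.e. $\Vert Q_1\Vert\le\Vert P_1\Vert$. Internal disjointness of $P_1$ and $Q_1$ follows from the minimal choice of $y$: no internal vertex of $Q_1$ meets $P\supseteq P_1$, and symmetrically an internal vertex of $P_1$ lying on $Q_1$ would be a vertex of $P$ occurring on $Q$ before $y$, contradicting minimality. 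Finally $\Vert P_1\Vert=\dist(v,y)\ge 1$, since $y=v$ would place $v$ on $Q$ and force $\dist(u,v)\le 0$, contradicting $u\neq v$. Thus $P_1\cup Q_1$ is a genuine $(v,u)$-trap with tip $y$ of length $\dist(v,y)\le\dist(v,x)=\rho_0$, which gives $\rho^*(v,u)\le\rho_0$ and completes the argument.

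I expect the main obstacle to be purely combinatorial bookkeeping rather than anything deep: because the two geodesics may cross each other several times, the delicate point is to isolate a single clean crossing $y$ and then to check \emph{simultaneously} that (a) the resulting sub-geodesics are internally disjoint and (b) the length comparison $\Vert Q_1\Vert\le\Vert P_1\Vert$ survives the truncation. The computation in the previous paragraph shows that both conditions are forced precisely by the defining inequality $\dist(u,x)\le\rho_0=\dist(v,x)$ of the witness $x$, so the choice of ``first intersection from $u$'' is exactly what makes the surgery go through. Note that no use of the $t$-dispersed hypothesis is needed here; the statement holds for arbitrary digraphs.
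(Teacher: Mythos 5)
Your proof is correct and follows essentially the same route as the paper: the easy direction is immediate from the trap definition, and the reverse direction is obtained by taking a witness $x \in S_\rho(v)\cap B_\rho(u)$, choosing geodesics from $v$ and $u$ to $x$, and truncating both at an extremal intersection vertex, where the geodesic property guarantees the truncated paths still satisfy $\Vert Q_1\Vert \le \Vert P_1\Vert$ and have length at most $\rho$. The only differences are cosmetic --- the paper cuts at the vertex of $P\cap Q$ closest to $v$ while you cut at the first vertex of $Q$ lying on $P$, and you additionally spell out the internal-disjointness check, the positivity of $\Vert P_1\Vert$, and the degenerate case $u=v$ (where the statement indeed needs $u\neq v$), all of which the paper leaves implicit.
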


\begin{proof}
If there is a $(v,u)$-trap of length $\rho$, then $S_\rho(v) \cap B_\rho(u) \neq \emptyset$. Thus, it suffices to prove that whenever $S_\rho(v) \cap B_\rho(u) \neq \emptyset$, there is a trap of length at most $\rho$. Take a vertex $x$ in the intersection and let $P$ and $Q$ be geodesic paths from $v$ and from $u$ to $x$, respectively. If these paths are disjoint, then we have a trap as claimed. If not, then we replace $x$ by the vertex $y$ in $P\cap Q$ that is as close as possible to $v$. Then we also replace the paths $P$ and $Q$ by the subpaths ending at $y$. Since both paths are geodesic, the number of edges that were removed on $P$ and $Q$ is the same. Thus, the new paths (which are now internally disjoint) form a $(v,u)$-trap of length at most $\rho$.
\end{proof}

\begin{lemma}\label{lem:rho*decreases}
Let $G$ be a digraph that is $t$-dispersed, let $u,v$ be distinct vertices in $G$ and let $u'$ and $v'$ be their outneighbors, respectively. If $\rho^*(v',u')\le t$, then $\rho^*(v,u) \le \rho^*(v',u') + 1$. 
\end{lemma}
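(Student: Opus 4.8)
The plan is to pass to the sphere--ball description of trap distance provided by Lemma~\ref{lem:sphere intersects ball}, and then to exhibit a single witness vertex that lies on the correct sphere around $v$ while sitting inside the corresponding ball around $u$. Concretely, I would set $\rho = \rho^*(v',u')$. Since $\rho \le t$, Lemma~\ref{lem:sphere intersects ball} supplies a vertex $x$ with $x \in S_\rho(v') \cap B_\rho(u')$, that is, $\dist(v',x)=\rho$ and $\dist(u',x)\le\rho$. Because $\rho^*(v,u)$ is, again by Lemma~\ref{lem:sphere intersects ball}, the least radius $\rho'$ for which $S_{\rho'}(v)\cap B_{\rho'}(u)\neq\emptyset$, it suffices to show that this same $x$ satisfies $x \in S_{\rho+1}(v)\cap B_{\rho+1}(u)$.

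The containment $x\in B_{\rho+1}(u)$ is the easy half: using the edge $uu'\in E(G)$ and the triangle inequality, $\dist(u,x)\le 1+\dist(u',x)\le \rho+1$. The delicate half is $x\in S_{\rho+1}(v)$, i.e.\ that $\dist(v,x)$ equals $\rho+1$ \emph{exactly}, and not merely at most $\rho+1$. The upper bound $\dist(v,x)\le 1+\dist(v',x)=\rho+1$ is once more the triangle inequality along the edge $vv'$. For the matching lower bound I would invoke Lemma~\ref{lem:trap1}: since $vv'\in E(G)$ and $\dist(v',x)=\rho\le t$, that lemma forces $\dist(v,x)>\dist(v',x)=\rho$, hence $\dist(v,x)\ge\rho+1$. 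Combining the two bounds yields $\dist(v,x)=\rho+1$.

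I expect the main conceptual obstacle to be exactly this last point, and it is worth isolating why it matters. Trap distance is asymmetric: in a $(v,u)$-trap the longer of the two geodesics must emanate from $v$, so in the sphere--ball picture $x$ must sit \emph{on} the sphere around $v$ but only \emph{inside} the ball around $u$. If $x$ merely landed in $B_{\rho+1}(v)$ with $\dist(v,x)\le\rho$, the orientation of the roles of $v$ and $u$ could not be guaranteed, and the inequality $\dist(u,x)\le\dist(v,x)$ required for a genuine $(v,u)$-trap might fail. The strict-monotonicity conclusion of Lemma~\ref{lem:trap1}---which is precisely where the hypothesis $\rho^*(v',u')\le t$ is consumed, since it guarantees $\dist(v',x)=\rho\le t$ so that the lemma applies---is what pins $x$ onto the sphere $S_{\rho+1}(v)$ and thereby respects this orientation. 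Once $x\in S_{\rho+1}(v)\cap B_{\rho+1}(u)$ is established, Lemma~\ref{lem:sphere intersects ball} closes the argument and produces the desired trap automatically, giving $\rho^*(v,u)\le\rho+1=\rho^*(v',u')+1$.
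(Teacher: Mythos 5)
Your proof is correct and follows essentially the same route as the paper's: both hinge on the second part of Lemma~\ref{lem:trap1} to force $\dist(v,x)=\rho^*(v',u')+1$ for the tip $x$ of a minimum $(v',u')$-trap, and then invoke Lemma~\ref{lem:sphere intersects ball} to convert that witness into the bound $\rho^*(v,u)\le\rho^*(v',u')+1$. The only difference is presentational: the paper argues by contradiction on whether the extended paths $vv'P'$ and $uu'Q'$ are geodesic, whereas you verify the sphere--ball membership directly (and correctly observe that on the $u$-side the plain triangle inequality suffices, with no geodesic claim needed).
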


\begin{proof}
Let $t_1=\rho^*(v,u)$ and $t_1'=\rho^*(v',u')\le t$.
Suppose, for a contradiction, that $t_1 \ge t_1'+2$. Let $P'\cup Q'$ be a trap certifying that $t_1'=\rho^*(v',u')$. Let $P=vv'P'$ and $Q=uu'Q'$. If both of these paths are geodesic paths, then by Lemma \ref{lem:sphere intersects ball}, $\rho^*(v,u)\le t_1'+1$, which contradicts the assumption that $t_1 \ge t_1'+2$. Consequently, one of the paths is not geodesic. Suppose this is $P$. In that case, $\dist(v',x) = t_1' \le t$, which is not possible by second part of Lemma \ref{lem:trap1}. This contradiction completes the proof.
If $Q$ is not geodesic, the proof is the same.
\end{proof}

If the girth of the underlying undirected graph of a digraph $G$ is at least $4t+1$, then $G$ is $t$-dispersed. Therefore, the following theorems, which give lower bounds on the cop number of a digraph in terms of its dispersion, may be seen as generalizations of the bounds from the previous section.
 
\begin{theorem}
Let $t \geq 1$ be an integer, and let $G$ be a digraph that is $t$-dispersed. For each vertex $v$, let $q_v$ be equal to the outdegree $d^+(v)$ of $v$ if $v$ is not contained in any digons, and be equal to $d^+(v)-1$ otherwise. If $q=\min\{q_v\mid v\in V(G)\}$, then $c(G) > \frac{1}{e\,t} q^{t}$.
\label{thmOutDeg}
\end{theorem}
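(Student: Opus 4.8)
The plan is to follow the proof of Theorem~\ref{thmWeight} line by line, with the trap distance $\rho^*$ playing the role of distance and Lemmas~\ref{lem:trap1}--\ref{lem:rho*decreases} replacing the girth hypothesis. As in that proof, the case $t=1$ is degenerate (there $r=0$), and the case $q\le 1$ is trivial since then $\tfrac{1}{e\,t}q^t<1$; so I assume $t\ge 2$ and $q\ge 2$, which gives $r:=(1-\tfrac1t)q\ge 1$. Write $K\le\tfrac{1}{e\,t}q^t$ for the number of cops. I would produce a robber strategy that evades $K$ cops forever. The robber begins at an outneighbor $v_1$ of the common cop vertex $y_1$. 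At a general state $s$, with the robber at $v_s$ having arrived along the arc $y_sv_s$, he chooses $q$ distinct outneighbors $u_1,\dots,u_q$ of $v_s$, all different from $y_s$; this is possible exactly because $q_{v_s}\ge q$, the one lost outneighbor in the case $v_sy_s\in E(G)$ being the reason for setting $q_v=d^+(v)-1$ there.

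For the weighting, I would put a cop $C$ with $\rho^*(v_s,C)\le t$ into the class $\mathcal{C}_i$ when its $(v_s,C)$-traps leave $v_s$ along the arc $v_su_i$; this arc is well defined by Lemma~\ref{lem:trap2}, and I assign $w(C)=r^{\,t-\rho^*(v_s,C)}$. Every other cop (those with $\rho^*(v_s,C)>t$, or whose trap arc is $v_sy_s$ or an unchosen outneighbor) receives weight $1$. With $W_i=\tfrac{k}{q}+\sum_{C\in\mathcal{C}_i}w(C)$ and $W=\sum_i W_i$, the robber moves to the $u_j$ minimizing $W_j$. The safety mechanism is identical in spirit to the undirected case: a cop in $\mathcal{C}_j$ with $\rho^*(v_s,C)=1$ would have weight $r^{t-1}$, so $W_j<r^{t-1}$ rules such cops out, and unwinding a length-$1$ trap through $u_j$ shows that a cop with $\rho^*(v_s,C)=1$ in $\mathcal{C}_j$ is precisely a cop sitting on $u_j$ or having an arc into $u_j$; hence $W_j<r^{t-1}$ certifies that the move to $u_j$ is safe. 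Initially all cops sit at $y_1$, and the second part of Lemma~\ref{lem:trap1} applied to the arc $y_1v_1$ gives $\rho^*(v_1,y_1)>t$, so every cop has weight $1$ and $W=K<qr^{t-1}$ by the estimate~(\ref{eq:1}).

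The induction then amounts to preserving $W<qr^{t-1}$, and as in~(\ref{eq:W'}) it is enough to bound the weight $W'$ after one round by $W'\le r\,W_j+K$. This rests on two claims. First, for $C\in\mathcal{C}_j$ the robber advances one arc toward the trap and the cop advances at most one, so Lemma~\ref{lem:rho*decreases} (supplemented by the easier stationary-cop case, obtained by prepending the arc $v_sv_{s+1}$ to a new trap and invoking Lemma~\ref{lem:trap1}) gives $\rho^*(v_{s+1},C')\ge\rho^*(v_s,C)-1$, hence $w'(C)\le r\,w(C)$. Second, every cop $C\notin\mathcal{C}_j$ satisfies $w'(C)\le 1$. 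Granting both, the computation in~(\ref{eq:W'}) gives $W'<qr^{t-1}$ and the proof is complete.

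The second claim is the crux and the step I expect to fight hardest; it is the directed substitute for the undirected ``behind $v_s$, or else far away'' dichotomy. Cops with $\rho^*(v_s,C)>t$ remain at trap distance $\ge t$ by Lemma~\ref{lem:rho*decreases}, so the essential case is a cop $C$ carrying a short trap that leaves $v_s$ along an arc $v_su$ with $u\ne u_j$. For such a cop I would show $\rho^*(v_{s+1},C')\ge t$ outright. If not, there is a $(v_{s+1},C')$-trap $P_2\cup Q_2$ of length at most $t-1$; prepend the arc $v_sv_{s+1}=v_su_j$ to $P_2$ and the arc $CC'$ to $Q_2$. Because Lemma~\ref{lem:trap1} guarantees $\dist(u,x)>\dist(v,x)$ whenever $uv\in E(G)$ and $\dist(v,x)\le t$, both extended paths are geodesics, and Lemma~\ref{lem:sphere intersects ball} then extracts from them a $(v_s,C)$-trap of length at most $t$ leaving $v_s$ along $v_su_j$. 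This trap and the assumed one leave $v_s$ along the distinct arcs $v_su_j$ and $v_su$, contradicting Lemma~\ref{lem:trap2}. The only point needing care is that the shortening in Lemma~\ref{lem:sphere intersects ball} cannot swallow the leading arc $v_su_j$ unless the cop coincides with $v_s$ (which is excluded), and it is here that Lemma~\ref{lem:trap2}, and with it the full $t$-dispersed hypothesis, does the decisive work.
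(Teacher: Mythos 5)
Your argument for the case $t\ge 2$ is correct and is essentially the paper's own proof: the same weights $w(C)=r^{\,t-\rho^*(v_s,C)}$, the same partition of the nearby cops into classes $\mathcal{C}_i$ according to the common first arc of their short traps (justified by Lemma~\ref{lem:trap2}), the same initial-condition argument, and the same induction $W'\le r\,W_j+K<qr^{t-1}$. In fact you are slightly more careful than the paper on two points: you make explicit the stationary-cop case of Lemma~\ref{lem:rho*decreases} (the lemma as stated moves both endpoints to outneighbors), and you track explicitly why the trap produced by prepending $v_sv_{s+1}$ and shortening via Lemma~\ref{lem:sphere intersects ball} still leaves $v_s$ along the arc $v_su_j$ (the shortening cannot reach $v_s$ itself since the cop is not at $v_s$), which is exactly what the contradiction with Lemma~\ref{lem:trap2} requires. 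The paper leaves both of these implicit.

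There is, however, one genuine gap: you discard the case $t=1$ as ``degenerate'' and never prove it, but the theorem is asserted for all $t\ge 1$, and for $t=1$ the claim $c(G)>\tfrac{1}{e}q$ is not vacuous --- it genuinely uses $1$-dispersedness and cannot be absorbed into the weighting scheme (as you note, $r=0$ there). The paper handles it with a short separate argument, which you would need to supply: if the robber sits at $v$, a single cop at a vertex $c$ cannot ``guard'' two distinct out-neighbors $u_1,u_2$ of $v$, because guarding $u_i$ means precisely that there is a $(v,c)$-trap of length $1$ with tip $u_i$ (the arc $vu_i$ together with a path of length at most $1$ from $c$ to $u_i$), and two such traps with distinct tips are internally disjoint, contradicting property (1) of the definition of $1$-dispersed (with the digon condition and property (2) ruling out threats from the vertex the robber just left). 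Hence fewer than $\min_v d^+(v)\ge q$ cops always leave the robber a safe out-neighbor, giving $c(G)\ge q>\tfrac{1}{e}q$. With that paragraph added, your proposal is complete.
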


\begin{proof}
We first consider the case that $t = 1$. Suppose the robber occupies a vertex $v$. As $G$ is $1$-dispersed, a single cop cannot guard more than one out-neighbor of $v$. Therefore, the cop number of $G$ must be at least the minimum out-degree of $G$; otherwise, the robber will always have a safe out-neighbor to visit. The minimum out-degree of $G$ is greater than $\frac{1}{e}q$, so the theorem holds for $t = 1$.

Suppose that $t \geq 2$. We show that under the stated dispersion and degree conditions, the robber has a winning strategy, provided that the number of cops is at most $\frac{1}{e\,t} q^{t}$. 

We begin the game on the robber's move with all cops on a single vertex $y_1 \in V(G)$. We let the robber begin at a vertex $v_1$ which is an out-neighbor of $y_1$. To show that the cops have no winning strategy on $G$, it suffices to show that the cops cannot win from this position. At any given state $s$ of the game with robber to move, we assume that the robber is at a vertex $v_s$ and that he will move to a neighboring vertex $v_{s+1}$ different from $v_{s-1}$ and different from $v_s$. After the cops move, we will reach the next state $s+1$.  We will show that the robber can move to a vertex $v_{s+1}$ for which no cop is positioned at a vertex of the closed in-neighborhood $N^-[v_{s+1}]$. Thus the robber will be able to avoid capture during state $s$, and hence forever. 

As in the proof of Theorem \ref{thmGrowth}, we set $r=(1-\frac{1}{t})q$ and let $K \leq \frac{1}{e\,t} q^{t}$ denote the number of cops. 

Suppose that the game is at a state $s\geq 1$. Let $u_1,\dots, u_q$ be distinct out-neighbors of $v_s$ that are different from $v_{s-1}$. (There may be additional out-neighbors of $v_s$, but we do not need these vertices for our strategy.) For each $i=1,\dots,q$, let ${\mathcal C}_i$ be the set of cops $C$ such that 
\begin{equation}
\label{bobo}
 S_{\rho}(v_s) \cap S_{\rho-1}(u_i) \cap B_{\rho}(C) \neq \emptyset \textrm{ for some } \rho \leq t. 
\end{equation}
Note that each cop in $\mathcal{C}_i$ has the trap distance from $v_s$ bounded by $t$, $\rho^*(v_s,C)\le t$, by Lemma \ref{lem:sphere intersects ball}. By Lemma \ref{lem:trap2}, each cop belongs to at most one family $\mathcal{C}_i$, i.e. $\mathcal{C}_i\cap \mathcal{C}_j = \emptyset$ if $i\ne j$. 

For a cop $C\in \bigcup_{i = 1}^q{\mathcal C}_i$, we let $\rho(C)=\rho^*(v_s,C)$, and we define the \textit{weight} $w(C) = r^{t - \rho(C)}$. We let $k$ be the number of cops that are in none of the sets ${\mathcal C}_1,\dots,{\mathcal C}_q$, and we define the weight of each such cop to be equal to $1$. Let $W_i = \frac{k}{q} + \sum_{C\in {\mathcal C}_i} w(C)$, and let $W$ be the total weight of all cops. Then $W = \sum_{i=1}^q W_i$. Now, the robber selects the vertex $u_j$ for which $W_j$ is minimum and moves to $u_j$. If $W_j < r^{t-1}$, then neither $u_j$ nor any of its in-neighbors contains a cop, since such a cop $C$ would be in $\mathcal{C}_j$ with $\rho(C)=\rho^*(v_s,C)=1$ and would have weight $r^{t-1}$.
Hence, $u_j$ is a safe vertex for the robber. In order to prove that the robber is never captured, it suffices to prove that $W < qr^{t-1}$, since in that case $W_j < r^{t-1}$ for some $1 \leq j \leq q$.

Initially, ${\mathcal C}_1,\dots,{\mathcal C}_q$ are all empty. To see this, note that, initially, all cops are at $y_1$, which is an in-neighbor of $v_1$. 
If a cop $C$ at $y_1$ belongs to some $\mathcal{C}_i$ ($1\le i\le q$), then there is a $(v_1,y_1)$-trap $P\cup Q$ of length at most $t$, where $P$ uses the edge $v_1u_i$. Such a trap would contradict property (2) from the definition of $t$-dispersion. Thus, we have $W < qr^{t-1}$ at the start of the game, as long as we have fewer than $qr^{t-1}$ cops, which we see from the method of (\ref{eq:1}).

The rest of the proof is by induction. Let us assume that at the current state $s$, we have $W < qr^{t-1}$, and let $W'$ denote the total weight after the robber has moved to $v_{s+1}=u_j$ and state $s$ has ended. For every cop $C \in {\mathcal C}_j$, the robber moved closer to $C$, and $C$ may have moved closer to the robber. 
Lemma \ref{lem:rho*decreases} shows that the trap distance between the robber and $C$ can decrease by at most 1, and thus, the new weight $w'(C)$ is at most $rw(C)$. 

Consider now a cop $C'\notin \mathcal{C}_j$. At state $s+1$, the cops are again partitioned into the sets $\mathcal{C}_1', \dots, \mathcal{C}_q'$, defined with respect to the outgoing edges $v_{s+1}u_1', \dots, v_{s+1}u_q'$, where the out-neighbors $u_1',\dots,u_q'$ are distinct from $v_s$. If a cop $C'\notin \mathcal{C}_j$ is in some $\mathcal{C}_i'$, and its weight is $w'(C') > 1$, then $\rho^*(v_{s+1},C') \le t-1$. Lemma \ref{lem:rho*decreases} shows that the trap distance between the robber and $C'$ could have decreased by at most 1 during the previous move. Thus, $\rho^*(v_s,C')$ was at most $t$ at state $s$ and the trap certifying this included the edge $v_sv_{s+1}=v_s u_j$. This shows that $C'$ was in $\mathcal{C}_j$ at state $s$, a contradiction. We conclude that any cop $C$ that was not in $\mathcal{C}_j$ at state $s$ has a new weight $w'(C)=1$ at the beginning of state $s+1$.

Now, we see that $W' \leq r \cdot W_j + K < qr^{t-1}$ in the same way as in (\ref{eq:W'}). This completes the proof.
\end{proof}

Similarly to Theorem \ref{thmGrowth}, we may define a notion of growth for digraphs to generalize Theorem \ref{thmOutDeg} to digraphs that grow quickly but do not necessarily have high minimum out-degree. Given positive integers $h,q$, we say that a digraph $G$ has \emph{$(h,q)$-growth} if for any vertex $v \in V(G)$ and any in-neighbor $y$ of $v$, the number of vertices $w$ satisfying $\dist(v,w)=h$ and $\dist(y,w) \ge h$ is at least $q$. The last condition just says that the geodesic from $v$ to $w$ does not pass through $y$. We observe that a digraph $G$ has $(1,q)$-growth if $q$ is defined in the same way as in Theorem \ref{thmOutDeg}.

\begin{theorem}
Let $h \ge1$ and $t\geq 1$ be integers, and let $G$ be a digraph that is $(h(t+1)-1)$-dispersed and has $(h,q)$-growth. 
Then $c(G) > \frac{1}{e\,t} \, q^{t}$.
\label{thmFrankl}
\end{theorem}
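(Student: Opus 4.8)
The plan is to run the trap-distance argument of Theorem~\ref{thmOutDeg}, but with the robber playing in \emph{super-moves} of $h$ steps, exactly as Theorem~\ref{thmGrowth} modifies Theorem~\ref{thmWeight}. The dispersion parameter $h(t+1)-1$ is chosen precisely so that Lemma~\ref{lem:trap1} furnishes unique geodesics, and Lemma~\ref{lem:trap2} a well-defined ``first edge'', for every pair at trap distance at most $h(t+1)-1$; this is the directed substitute for the ``unique geodesic up to distance $2h(t+1)-2$'' used in Theorem~\ref{thmGrowth}. I would dispose of $t=1$ by the directed, growth-flavoured version of Frankl's one-step argument (as in the $t=1$ cases of Theorems~\ref{thmOutDeg} and~\ref{thmGrowth}), and assume $t\ge 2$ from then on.

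For the main game, at the start of state $s$ the robber sits at $v_s$, having entered along an edge $y_s v_s$, and $(h,q)$-growth supplies vertices $u_1,\dots,u_q$ with $\dist(v_s,u_i)=h$ and $\dist(y_s,u_i)\ge h$; the robber will travel the (unique) geodesic of length $h$ to a chosen $u_j=v_{s+1}$. Mirroring \eqref{bobo}, I would put a cop $C$ in $\mathcal C_i$ when
\[
    S_\rho(v_s)\cap S_{\rho-h}(u_i)\cap B_\rho(C)\ne\emptyset \quad\text{for some } \rho\le h(t+1)-1,
\]
i.e. when $C$ has a minimal $(v_s,C)$-trap whose geodesic from $v_s$ runs through $u_i$. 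Lemma~\ref{lem:sphere intersects ball} identifies membership with the existence of such a trap of length $\rho^*(v_s,C)\le h(t+1)-1$, while Lemmas~\ref{lem:trap1} and~\ref{lem:trap2} make the $\mathcal C_i$ pairwise disjoint. The weights are rescaled to units of $h$, namely
\[
    w(C)=r^{\,t+1-\lceil (\rho^*(v_s,C)+1)/h\rceil}
\]
for $C\in\bigcup_i\mathcal C_i$ and $w(C)=1$ otherwise; the inner offset is calibrated (just as the ``$+2$'' is in the weight of Theorem~\ref{thmGrowth}) so that, on the one hand, $w(C)>1$ forces $\rho^*(v_s,C)\le ht-1$, and, on the other, $w(C)\ge r^{t-1}$ forces $\rho^*(v_s,C)\le 2h-1$. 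The quantities $W_i,W$ and the target $W<qr^{t-1}$ are then defined verbatim, the robber again moves toward a minimizing $u_j$, and the initialization ($\mathcal C_i$ empty because a $(v_1,y_1)$-trap through some $u_i$ of length $\le h(t+1)-1$ would violate dispersion property~(2) for the edge $y_1v_1$) together with \eqref{eq:1} gives $W<qr^{t-1}$ at the outset.

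The inductive step then follows the proof of Theorem~\ref{thmOutDeg}. For $C\in\mathcal C_j$, applying Lemma~\ref{lem:rho*decreases} once per step along the robber's and the cop's $h$ moves shows that $\rho^*$ drops by at most $h$, so $\lceil(\rho^*+1)/h\rceil$ falls by at most one and $w'(C)\le r\,w(C)$; one checks that the hypothesis $\rho^*\le h(t+1)-1$ of Lemma~\ref{lem:rho*decreases} survives through all $h$ intermediate positions, which the choice of dispersion exactly affords. For $C'\notin\mathcal C_j$ I would argue the contrapositive: if $w'(C')>1$ then $\rho^*(v_{s+1},C')\le ht-1$, so iterating Lemma~\ref{lem:rho*decreases} backwards gives $\rho^*(v_s,C')\le h(t+1)-1$, and prepending the robber's geodesic (which passes through $u_j$ by uniqueness in Lemma~\ref{lem:trap1}) to the certifying trap forces $C'\in\mathcal C_j$, a contradiction. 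Hence every such $C'$ resets to weight $1$, and $W'\le r\,W_j+K<qr^{t-1}$ follows exactly as in \eqref{eq:W'}.

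The step I expect to be the main obstacle is \emph{transit safety}: unlike the single-step setting of Theorem~\ref{thmOutDeg}, the robber is exposed at the $h-1$ interior vertices $p_1,\dots,p_{h-1}$ of its geodesic, and a cop that reaches $p_m$ within $m$ moves has a $(v_s,C)$-trap of length at most $m<h$ through $p_m$. To exclude these I would carry an auxiliary invariant along the induction---at the start of each state, no cop whose minimal trap begins toward a growth direction has $\rho^*(v_s,C)\le h-1$---and show that (i) the implication $w(C)\ge r^{t-1}\Rightarrow\rho^*\le 2h-1$ makes $W_j<r^{t-1}$ force the cops of $\mathcal C_j$ to trap distance at least $2h$, so that after the super-move they (and, by the reset argument, every other growth-direction cop) are back at trap distance at least $h$ and the invariant is restored, while (ii) a cop whose minimal trap begins in an \emph{excluded} direction cannot threaten the geodesic toward $u_j$, since by Lemma~\ref{lem:trap2} all of its traps of length at most $h(t+1)-1$ share that first edge and hence none of length at most $h$ can run through any $p_m$. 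Making these facts mesh with the factor-$r$ weight growth is what forces the off-by-one calibration of the inner offset against the tight dispersion bound $h(t+1)-1$, and this constant bookkeeping is the part I would carry out most carefully.
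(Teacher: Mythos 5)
Your plan transplants the strategy of Theorem \ref{thmGrowth} (commit in advance to a growth vertex $u_j$ and its unique geodesic, and partition the cops into sets $\mathcal{C}_i$ indexed by the growth vertices $u_1,\dots,u_q$) into the digraph setting, but this is exactly the point where the digraph proof must deviate, and your version has a genuine gap. The claim that Lemmas \ref{lem:trap1} and \ref{lem:trap2} make your sets $\mathcal{C}_i$ pairwise disjoint is false. Lemma \ref{lem:trap2} only guarantees that all $(v_s,C)$-traps of length at most $h(t+1)-1$ share the same \emph{first edge} $v_s w$; it does not force their geodesics from $v_s$ to agree up to distance $h$. Since several of the growth vertices may lie behind the same out-neighbor $w$ (the growth condition supplies $q$ vertices at distance $h$, not $q$ distinct out-neighbors), a single cop can have one trap whose geodesic passes through $u_i$ and another through $u_{i'}$ with $i \neq i'$: the two traps share the edge $v_s w$, hence are not internally disjoint, so dispersion condition (1) does not exclude this. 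For the same reason your inductive reset step fails: from $w'(C')>1$ and Lemma \ref{lem:rho*decreases} you only recover a trap certifying $\rho^*(v_s,C')\le h(t+1)-1$ whose first edge is the robber's first step $v_s w$ --- not a trap through $u_j$ --- so the contradiction ``$C'\in\mathcal{C}_j$'' does not follow. A cop aimed at some $u_{i'}\ne u_j$ behind the shared edge $v_s w$ can stay threatening through all $h$ steps and have its weight multiplied by $r$, yet it was never counted in $W_j$; the bound $W'\le r\,W_j+K$ collapses, and no recalibration of the inner offset in the weights repairs this, because the failure is in the partition, not in the arithmetic.

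This is precisely the ``subtle (yet essential) difference'' the paper flags: the path $P_s$ cannot be selected in advance. The paper's proof instead partitions the cops by the out-neighbors $w_1,\dots,w_p$ of $v_s$ (the possible first edges), weights each class by the number $d_l$ of growth vertices whose geodesics begin with $v_s w_l$, and has the robber, at \emph{every one} of the $h$ steps, move along the edge minimizing $\sum_{C\in\mathcal{C}_l}w(C)/d_l$, recomputing the partition and the multiplicities $d_l$ after each cop move. A pigeonhole argument shows this ratio never exceeds the average weight $Z=\tfrac{1}{q}\sum_{l}W_l\le W/q$, only the cops behind the chosen edge remain threatening after each step, and since $d_j=1$ at the final step, the total weight of cops still threatening when the robber reaches $v_{s+1}$ is at most $Z$; the induction then closes via $W'\le rZ+K<qr^{t-1}$. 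Your proposal has the correct weight formula and safety thresholds (e.g.\ $w(C)>1\Rightarrow\rho^*\le ht-1$ and $w(C)\ge r^{t-1}\Rightarrow\rho^*\le 2h-1$), but without this adaptive, first-edge-based bookkeeping the core induction does not go through.
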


\begin{proof}
First, we consider the case that $t \geq 2$. The theorem is trivial for $q = 1$; therefore we assume that $q \geq 2$. We will show that under the stated dispersion and growth conditions, the robber has a winning strategy, provided that the number of cops is at most $\frac{1}{e\,t} q^{t}$. 

We begin the game on the robber's move with all cops at a single vertex $y_1 \in V(G)$. We choose an out-neighbor $v_1 \in N^+(y_1)$ and let the robber begin the game on $v_1$. To show that the cops have no winning strategy on $G$, it suffices to show that the cops cannot win from this position.

At the beginning of a given state $s \geq 1$ of the game with robber to move, we assume that the robber is at position $v_s$, and that on the previous move, the robber occupied a vertex $y_s$ (except when $s=1$, in which case we have defined $y_1$ separately). During state $s$, the robber will move along some geodesic path $P_s$ of length $h$ which does not pass through $y_s$, from $v_s$ to a new vertex $v_{s+1}$. The robber and cops will take turns moving, making a total of $h$ moves each. One essential difference from the previous proofs is that the path $P_s$ is not selected in advance, but is constructed one vertex at a time, depending on the moves of the cops. After the robber and cops have each made $h$ moves, we reach the state $s+1$.  We will show that the robber is able to avoid capture during state $s$, and hence forever. 

We will use a stronger condition at the beginning of each game state. We will assume that at the beginning of each state $s$, for each vertex $c$ with a cop positioned at it, every $(v_s,c)$-trap of length at most $h-1$ passes through $y_s$. Again, the intuition behind this condition is that we will always let the robber move away from $y_s$, so any traps containing $y_s$ are useless for the cops. Furthermore, we would like for the robber
to be able to move along the geodesic path $P_s$ and reach $v_{s+1}$ before being captured, and this restriction on traps ensures that this will be possible. Our game's initial configuration satisfies this condition, and we will show by induction that this condition is satisfied for the entire duration of each state $s$ (and hence at the beginning of state $s+1$). 

We let $r=(1-\frac{1}{t})q$, and let $K \leq \frac{1}{e\,t} q^{t}$ denote the number of cops. 
We will proceed in a similar way as in previous proofs, but there are some subtle (yet essential) differences, and thus we include the details. Throughout this proof we use the trap distance $\rho^*(v,C)$ from a vertex $v$ and a cop $C$, which refers to the current position of $C$. Since this position is changing over time, we have to specify at which time this trap distance is measured. We always assume that the trap distance is measured at the time when the robber is at the vertex $v$ and it is the robber's move.

Suppose that the game is at the beginning of state $s\geq 1$, and it is the robber's move. Recall that the robber occupies the vertex $v_s$. Let $u_1,\dots, u_q$ be distinct vertices for which $\dist(v_s,u_i) = h$ and $\dist(y_s,u_i) \ge h$ ($1 \leq i\leq q$). These vertices exist by the growth condition of $G$. Consider the out-neighbors $w_1,\dots,w_p$ of $v_s$ that are on some $(v_s,u_i)$-geodesic. For $l=1,\dots,p$, let $d_l\ge1$ be the number of vertices $u_i$ for which the $(v_s, u_i)$-geodesic includes the edge $v_sw_l$. For each $l=1,\dots,p$, let ${\mathcal C}_l$ be the set of cops $C$ for which there is a $(v_s,C)$-trap of length at most $h(t+1)-1$ that uses the edge $v_sw_l$. Since $G$ is $(h(t+1)-1)$-dispersed, Lemma \ref{lem:trap2} implies that each cop belongs to at most one set $\mathcal{C}_l$.

For a cop $C\in \bigcup_{l = 1}^p {\mathcal C}_l$, we let $\rho(C)=\rho^*(v_s,C)$, and we define the \textit{weight} 
\begin{equation}
   w(C) = r^{t+1 - \lceil \frac{\rho(C)+1}{h} \rceil}. \label{eq:weight1}
\end{equation}
We let $k$ be the number of cops that are in none of the sets ${\mathcal C}_1,\dots,{\mathcal C}_p$, and we define the weight of each such cop to be equal to $1$. Let $W_l = \sum_{C\in {\mathcal C}_l} w(C)$, and let $W$ be the total weight of all cops. Then $W = k + \sum_{l=1}^p W_l$. State $s$ will consist of $h$ steps $i$, $i = 1,\dots, h$. If, at the beginning of a step $i$, the robber occupies a vertex $v$ and there exists a $(v,C)$-trap of length at most $h(t+1) - i$ for some cop $C$, then we say that $C$ is \emph{threatening}. Note that at the beginning of step $1$ of the current state $s$, each cop in $\bigcup_{l = 1}^p {\mathcal C}_l$ is threatening. We define the ``average weight" of the threatening cops at step~1:
$$
   Z := \tfrac{1}{q} \sum_{l=1}^p W_l.
$$
Now, at the beginning of step $1$, the robber selects the vertex $w_j$ for which $W_j/d_j$ is minimum, and the robber moves to $w_j$. Let us note that $W_j/d_j \le Z$. If not, then $W_l>Zd_l$ for every $l=1,\dots,p$, and hence $W-k = \sum_{l=1}^p W_l > Z\sum_{l=1}^p d_l = Zq = W-k$, a contradiction. Now we let the cops move. After the cops move, only the cops in $\mathcal{C}_j$ remain threatening. Indeed, if a cop $C$ is threatening at the beginning of step 2, then there exists a $(w_j,C)$-trap of length at most $h(t+1) - 2$. Then, Lemma \ref{lem:rho*decreases} tells us that at the beginning of step 1, there existed a $(v_s, C)$-trap of length at most $h(t+1)-1$, and by Lemma \ref{lem:trap2}, this trap must have included the edge $v_s w_j$. This tells us that $C \in \mathcal{C}_j$.

Now, during steps $2$ to $h$, we repeat this process without changing $u_1,\dots,u_q$ or the weights $w(C)$, but updating $p$ and the robber's out-neighbors $w_1,\dots,w_p$, the corresponding sets $\mathcal{C}_l$, and the numbers $d_l$. Again, the robber moves to the out-neighbor $w_j$ for which $\sum_{C\in \mathcal{C}_j}w(C)/d_j$ is minimum. By the same arguments that we have made previously, only the cops in $\mathcal{C}_j$ remain threatening, and $\sum_{C\in \mathcal{C}_j}w(C)/d_j \leq Z$ still holds. In this way, the robber moves away from $v_s$ and reaches one of the vertices $u_i$ after making $h$ steps. As $\sum_{C\in \mathcal{C}_j}w(C)/d_j \leq Z$ holds at each step, and as $d_j = 1$ at beginning of step $h$, when the robber moves to a vertex $u_i$, the total weight of the threatening cops after step $h$ is at most $Z$. Then, we set $v_{s+1}=u_i$ and begin the new state $s+1$. 

We observe that for a cop $C$, if $\rho^*(v_s,C)$ was at most $h-1$ at the beginning of state $s$, then $C$ will not be able to capture the robber within the next $h$ moves, since the only short traps pass through $y_s$, and the robber moves away from $y_s$ at each step of state $s$. Each cop that is still threatening when the robber arrives at $v_{s+1}$ had weight at most $Z$ when the robber was at $v_s$. If $Z < r^{t-1}$ when the robber is at $v_s$, then each such cop $C$ had $\rho^*(v_s,C) \geq  2h$. Hence, at the end of state $s$, the trap distance from the robber to any such cop will be at least $h$ by Lemma \ref{lem:rho*decreases}. It therefore suffices to prove that $W < qr^{t-1}$, since in that case $Z < r^{t-1}$.

Initially, ${\mathcal C}_1,\dots,{\mathcal C}_p$ are all empty by condition (2) of our definition of dispersion. Thus $W < qr^{t-1}$ as long as we have fewer than $qr^{t-1}$ cops, which we prove as in (\ref{eq:1}).

The rest of the proof is by induction. Let us assume that at the current state $s$ we have $W < qr^{t-1}$. Let $\mathcal{D}$ be the set of threatening cops that have remained threatening throughout all $h$ steps of state $s$. We consider the new weight $w'(C)$ of each cop $C$ at the start of state $s+1$.
We claim that at the beginning of state $s+1$, each cop $C$ with weight greater than $1$ belongs to $\mathcal{D}$. Indeed, suppose that a cop $C$ has weight greater than $1$ at the beginning of state $s+1$. By our definition (\ref{eq:weight1}) of weight, this implies that $\rho^*(v_{s+1}, C) \leq ht - 1$. Then, by Lemma \ref{lem:rho*decreases}, we see that $\rho^*(v_s,C) \leq h(t+1) - 1$, and hence $C$ belongs to $\mathcal{D}$.

Moreover, for each cop $C \in \mathcal{D}$, the trap distance between the robber and $C$ has decreased by at most $h$. Therefore, for a cop $C \in \mathcal{D}$, at the beginning of state $s+1$, $C$ has a new weight $w'(C)$, which is at most $rw(C)$. Note that we have shown that the total weight from the beginning of state $s$ of all cops in $\mathcal{D}$ is at most $Z$, and hence the new total weight of the cops in $\mathcal{D}$ at the beginning of state $s+1$ is at most $rZ$. Furthermore, as shown above, any cop that does not belong to $\mathcal D$ has weight $1$ at the beginning of state $s+1$. Thus, at the beginning of state $s+1$, the new total weight $W'$ satisfies
$$W' \le rZ + K < r \cdot \frac{W}{q} + \frac{qr^{t-1}}{t} < qr^{t-1},$$
with the last inequality proved in the same way as $(\ref{eq:W'})$. This completes the proof for the case that $t \geq 2$.

In the case that $t = 1$, a very similar argument may be used. This time, however, we let each cop have a weight $1$ throughout the entire game. We again let the robber begin at a vertex $v_1$ with an in-neighbor $y_1$ at which all cops begin the game. At the beginning of a given state $s \geq 1$, we again suppose that the robber occupies a vertex $v_s$ and that the robber occupied a vertex $y_s$ on the previous move, except when $s = 1$, in which case $y_1$ is already defined. We again aim to show that for any vertex $c$ with a cop positioned at it, every $(v_s, c)$-trap of length $h-1$ passes through $y_s$. By using the same ``average weight" argument, we may show that the average weight $Z$ of the threatening cops may be kept below $1$ and hence that the robber will be able to evade all threatening cops at each stage. As all other parts of the strategy are the same as the $t \geq 2$ case, we do not include the details.
\end{proof}

\section{Bounds for expanders and an upper bound in terms of girth}

In Section \ref{sectionUndirected}, we showed that asymptotically, a graph of girth $g$ and minimum degree $d$ has cop number $\Omega(g^{-1}(d - 1)^{\frac{g}{4}})$. As discussed at the end of the introduction, we believe that the factor $\frac{1}{4}$ in the exponent is best possible. We are not able to prove this, but in this section we prove that this constant factor cannot be made much larger.

We will show that if $\Omega((\delta-1)^{c g})$ is a lower bound for the cop number of all graphs of minimum degree $\delta$ and girth $g$, then $c \leq \frac{3}{8}$. In order to do this, we will present families of $d$-regular graphs of fixed degree $d$ and arbitrarily large girth $g$, whose cop number is at most $(d-1)^{c g}$, where $c$ can be made arbitrarily close to $\frac{3}{8}$. One such family are the bipartite Ramanujan graphs introduced by Lubotsky, Phillips, and Sarnak in \cite{LPS}; these graphs are often called LPS Ramanujan graphs. In order to show that these graphs have cop number as small as claimed, we will present some general bounds for expander graphs and then apply these bounds to the family of LPS Ramanujan graphs. 

In proving our upper bound, we will make extensive use of the following graph parameter.

\begin{definition}
Let $G$ be a graph. For a real number $0 < \gamma < 1$, we define the parameter
$$h_{\gamma}(G) = \min_{0 < |S| \leq n^{1-\gamma}} \frac{|\partial S|}{|S|},$$
where $\partial S$ is the set of vertices in $V(G) \setminus S$ that have a neighbor in $S$.
\label{defIso}
\end{definition}

For a graph $G$, a nonempty vertex set $S \subseteq V(G)$, and a nonnegative integer $r$, we use $B_r(S)$ to denote the set of vertices in $V(G)$ whose distance from $S$ is at most $r$.

\begin{lemma}
Let $G$ be a graph of order $n$, $0 < \gamma < 1$, and let $h_{\gamma}(G) \geq \varepsilon > 0$. Let $r \ge 0$ be an integer. Then for any nonempty vertex set $S \subseteq V(G)$,  
$$|B_r(S)| \geq \min\{n^{1 - \gamma}, |S|(1+\varepsilon)^r\}.$$
\label{lemmaR}
\end{lemma}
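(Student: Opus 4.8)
The plan is to prove this by induction on $r$, using the isoperimetric bound encoded in $h_\gamma(G)$ to grow the ball by a factor of $1+\varepsilon$ at each step, and relying on the minimum in the statement to absorb the moment when the ball outgrows the size regime $|S|\le n^{1-\gamma}$ in which the bound $h_\gamma(G)\ge\varepsilon$ is guaranteed. For the base case $r=0$ we have $B_0(S)=S$, so $|B_0(S)|=|S|\ge\min\{n^{1-\gamma},|S|\}$ trivially.

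The inductive step rests on the observation that $B_{r+1}(S)$ is the disjoint union $B_r(S)\cup\partial B_r(S)$: any vertex at distance exactly $r+1$ from $S$ is adjacent to a vertex at distance $r$ and hence lies in $\partial B_r(S)$, and conversely every vertex of $\partial B_r(S)$ lies at distance exactly $r+1$ from $S$. Thus $|B_{r+1}(S)|=|B_r(S)|+|\partial B_r(S)|$. First I would dispose of the case $|B_r(S)|\ge n^{1-\gamma}$: here $|B_{r+1}(S)|\ge|B_r(S)|\ge n^{1-\gamma}\ge\min\{n^{1-\gamma},|S|(1+\varepsilon)^{r+1}\}$, and there is nothing more to do. Otherwise $0<|B_r(S)|<n^{1-\gamma}$, so $B_r(S)$ is an admissible set in Definition \ref{defIso} and the bound $|\partial B_r(S)|\ge\varepsilon\,|B_r(S)|$ applies, giving $|B_{r+1}(S)|\ge(1+\varepsilon)\,|B_r(S)|$. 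Now the inductive hypothesis $|B_r(S)|\ge\min\{n^{1-\gamma},|S|(1+\varepsilon)^r\}$ together with $|B_r(S)|<n^{1-\gamma}$ forces the minimum to be realized by $|S|(1+\varepsilon)^r$, whence $|B_{r+1}(S)|\ge(1+\varepsilon)\,|S|(1+\varepsilon)^r=|S|(1+\varepsilon)^{r+1}\ge\min\{n^{1-\gamma},|S|(1+\varepsilon)^{r+1}\}$, completing the induction.

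The only real subtlety—and the reason the conclusion is phrased as a minimum rather than as a clean exponential lower bound—is that $h_\gamma(G)$ only certifies expansion for sets of size at most $n^{1-\gamma}$. Hence the main point to get right is the threshold dichotomy above: one must verify that once the ball reaches size $n^{1-\gamma}$ the claimed bound is already met, and that strictly below this threshold the hypothesis of Definition \ref{defIso} genuinely applies to $B_r(S)$ (in particular $B_r(S)\neq\emptyset$, which holds since $S\neq\emptyset$). No estimation beyond this case split is required.
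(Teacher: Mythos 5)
Your proof is correct and is precisely the induction on $r$ that the paper invokes (the paper's proof is just the one-line remark ``follows easily by induction on $r$''). Your write-up supplies the details the paper omits—the disjoint decomposition $B_{r+1}(S)=B_r(S)\cup\partial B_r(S)$ and the threshold dichotomy at $n^{1-\gamma}$—and both steps are handled correctly.
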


\begin{proof}
The result follows easily by induction on $r$.
\end{proof}

The following theorem gives the main tool that we will use to establish our upper bound on $c$. The cop strategy used in this theorem is probabilistic and borrows key ideas from the strategies from \cite{Lu}, \cite{Sudakov}, and \cite{SeyyedThesis}.

\begin{theorem}
Let $\Delta \geq 3$, $\varepsilon$ ($0 < \varepsilon \leq \Delta - 2$), and $\gamma$ ($0 < \gamma \leq \frac{1}{2} (1- \log_{\Delta - 1}(1+\varepsilon))$) be fixed constants. Then there is an integer $n_0$ such that for every graph $G$ of maximum degree at most $\Delta$, with $h_{\gamma}(G) \ge \varepsilon$ and of order $n\ge n_0$, 
$$c(G) \leq n^{1-\frac{1}{2}\log_{\Delta - 1}(1+\varepsilon) + o(1)},$$ 
where the asymptotics are considered with respect to $n$.
\label{thmhgamma}
\end{theorem}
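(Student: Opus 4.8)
Write $a = \log_{\Delta - 1}(1+\varepsilon)$, so that $0 < a \le 1$ (the hypothesis $\varepsilon \le \Delta-2$ gives $1+\varepsilon \le \Delta-1$) and the asserted bound reads $c(G) \le n^{1 - a/2 + o(1)}$. The plan is to pre-position a single batch of roughly $n^{1-a/2}$ cops on a cleverly chosen set $D$ and to capture the robber within the first $\rho := \lceil \tfrac12 \log_{\Delta-1} n\rceil$ rounds. Everything hinges on two estimates for balls of radius $\rho$. On one hand, the maximum-degree hypothesis gives $|B_\rho(v)| \le (\Delta-1)^{\rho + O(1)} = n^{1/2 + o(1)}$ for every $v$, which bounds how far the robber can travel during the phase. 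On the other hand, since $\gamma \le \tfrac12(1-a)$ forces $(1+\varepsilon)^\rho = n^{a/2} \le n^{1-\gamma}$, Lemma~\ref{lemmaR} gives the matching lower bound $|B_\rho(S)| \ge \min\{n^{1-\gamma},\, |S|(1+\varepsilon)^\rho\} = \min\{n^{1-\gamma},\, |S| n^{a/2}\}$ for every nonempty $S$. This is the only place the expansion hypothesis $h_\gamma \ge \varepsilon$ enters, and it says that balls grow at the expansion rate until they saturate at scale $n^{1-\gamma}$.

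I would choose $D$ by the probabilistic method: let $D$ be a random subset of $V(G)$ with $|D| = \lceil n^{1-a/2}\log^2 n\rceil = n^{1-a/2+o(1)}$. The goal is to show that $D$ may be chosen to satisfy the Hall-type condition that for every vertex set $Z$ with $|Z| \le \max_v |B_\rho(v)|$,
$$|B_\rho(Z) \cap D| \ge |Z|.$$
To see this holds with high probability, fix $Z$ and combine the ball lower bound with the concentration of $|B_\rho(Z) \cap D|$ around $|D|\,|B_\rho(Z)|/n$. For small $Z$ (where $|Z| n^{a/2} \le n^{1-\gamma}$) one gets $|B_\rho(Z)\cap D| \gtrsim |D|\cdot|Z| n^{a/2}/n = |Z|\log^2 n$, which beats $|Z|$ with room to spare; for large $Z$ one gets $|B_\rho(Z)\cap D| \gtrsim |D| n^{1-\gamma}/n = n^{1-a/2-\gamma}\log^2 n$, and here the hypothesis $\gamma \le \tfrac12(1-a)$ is exactly what guarantees $n^{1 - a/2 - \gamma} \ge n^{1/2} \ge |Z|$. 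A Chernoff bound for each $Z$ followed by a union bound over all $Z$ of each size then yields the claim for a suitable fixed $D$.

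Fixing such a good $D$, the cops start the game on $D$. When the robber appears at a vertex $x$, the Hall condition applied to the bipartite ``distance-$\le \rho$'' graph between $B_\rho(x)$ and $D$ produces, via Hall's theorem, an injection $\phi\colon B_\rho(x)\to D$ with $\dist(z,\phi(z))\le \rho$ for every $z\in B_\rho(x)$; note that $|D|\ge |B_\rho(x)|$ since $1 - a/2 \ge \tfrac12$. Each cop $\phi(z)$ now marches along a geodesic to $z$, reaching it within $\rho$ moves and then waiting there, so that by the end of round $\rho$ every vertex of $B_\rho(x)$ carries a cop. Since the robber can travel at most $\rho$ steps from $x$ in $\rho$ rounds, it is confined to $B_\rho(x)$ and is therefore captured. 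This uses $|D| = n^{1-a/2+o(1)}$ cops, which gives the theorem.

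The main obstacle is the uniform Hall estimate of the second paragraph: the inequality $|B_\rho(Z)\cap D|\ge |Z|$ must hold simultaneously for the exponentially many sets $Z$, so the per-set Chernoff bounds have to be strong enough to survive a union bound across all scales $|Z|$ from $1$ up to $n^{1/2+o(1)}$ — this is precisely where the $\log^2 n$ overhead in $|D|$ and the exact threshold $\gamma \le \tfrac12(1-a)$ are consumed. A secondary and more routine point is the capture timing in the third paragraph (who moves last when a cop and the robber meet), which is handled by occupying $B_\rho(x)$ one step ``ahead'' of the robber or by a standard parity adjustment absorbed into the $o(1)$.
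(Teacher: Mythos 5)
Your proposal is correct and follows essentially the same strategy as the paper's proof: randomly pre-position roughly $n^{1-a/2+o(1)}$ cops, use Lemma~\ref{lemmaR} to lower-bound $|B_\rho(S)|$, verify a Hall condition via Chernoff plus union bound, and then fill $B_\rho(x)$ with a matching of cops within $\rho$ rounds to trap the robber. The only differences are cosmetic: the paper samples cops independently with probability $n^{-\kappa}\log^3 n$ and takes the union bound over subsets of $B_r(v)$ followed by the $n$ choices of $v$, while you use a fixed-size random set with a $\log^2 n$ factor and a single union bound over all candidate sets $Z$, both of which survive the same calculation.
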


\begin{proof}
We take a small value $\delta > 0$, and define the value 
$\kappa = (\frac{1}{2} - 2 \delta) \log_{\Delta - 1}(1+\varepsilon)$. Note that $\gamma < \frac{1}{2} - \kappa.$  Next, we fix a sufficiently large graph $G$ of maximum degree at most $\Delta$ which satisfies $h_{\gamma}(G) \geq \varepsilon$. We will show that $c(G) \leq n^{1-\kappa+o(1)} = n^{1-\frac{1}{2}\log_{\Delta - 1}(1+\varepsilon) + o(1)}$, with the last equality holding by letting $\delta$ tend to $0$.

We define $r = \lfloor \frac{(\frac{1}{2} - \delta)\log n}{\log (\Delta - 1)} \rfloor$. We will prove later that with this choice of $r$, any vertex $v \in V(G)$ has $|B_r(v)| < \sqrt{n}$. Furthermore, we estimate: 
$$(1+\varepsilon)^r = (\Delta-1)^{r \log_{\Delta-1}(1+\varepsilon)} > n^{(\frac{1}{2} - 2\delta) \log_{\Delta-1}(1+\varepsilon)} = n^\kappa.$$
For the inequality to hold, we have assumed that $n$ is large enough so that $\delta\log_{\Delta-1}n\ge1$.
Lemma \ref{lemmaR} tells us that $|B_r(S)| \geq |S| (1 + \varepsilon)^r$ holds for any set $S \subseteq V(G)$ that is not too large, and therefore $\kappa$ is defined so that 
\begin{equation}
    \label{eq:ball around S large}
    |B_r(S)| \geq |S|n^\kappa 
\end{equation}
for such a set $S$.  

Let $C \subseteq V(G)$ be a random subset of vertices, obtained by taking each vertex of $G$ independently at random with probability $p = n^{-\kappa} \log^3 n$. 
Note that in contrast to previous sections, here the symbol $C$ will denote the set of starting positions of cops, rather than an individual cop.

We place a cop at each vertex of $C$. The size of $C$ is a binomially distributed random variable, and the expected size of $C$ is $n^{1-\kappa} \log^3 n$. We will estimate the size of $C$ with a Chernoff bound (c.f. \cite{MolloyReed}, Chapter 5), which states that for a binomally distributed variable $X$ with mean $\mu$, and for $0 \leq t \leq \mu$, 
\begin{equation}
    \Pr(|X - \mu| > t) < 2\exp(-\frac{t^2}{3 \mu}).
    \label{eq:Chernoff}
\end{equation}

By (\ref{eq:Chernoff}), the probability that $|C| > 2n^{1-\kappa} \log^3 n$ is less than $2 \exp \left ({-\frac{1}{3}n^{1-\kappa}  \log^3 n} \right ) = o(1)$, and hence we see that a.a.s. 
\begin{equation}
    |C| \leq 2n^{1-\kappa} \log^3 n.
    \label{eq:size of C small}
\end{equation}

We let the robber begin the game at a vertex $v \in V(G)$. We will attempt to capture the robber by using the first $r$ moves of the game to place a cop at each vertex of $B_r(v)$. This way, the robber will surely be captured before he has a chance to escape from $B_r(v)$. First, we will estimate the size of $B_r(v)$. As $G$ has maximum degree at most $\Delta$, we see that 
$$
   |B_r(v)| \leq 1 + \Delta \sum_{j = 0}^{r-1} (\Delta - 1)^j = 
   1+\frac{\Delta}{\Delta - 2} ((\Delta - 1)^r - 1) \le
   1 + \frac{\Delta}{\Delta - 2}(n^{\frac{1}{2} - \delta} - 1) < \sqrt{n},
$$
with the last inequality holding for sufficiently large $n$.

We now define an auxiliary bipartite graph $H$ with (disjoint) partite sets $B_r(v)$ and $C$. For vertices $u \in B_r(v)$ and $u' \in C$, we add an edge $uu'$ to $E(H)$ if the distance in $G$ from $u$ to $u'$ is at most $r$. If we can find a matching in $H$ that saturates $B_r(v)$, then the cops from $C$ can fill $B_r(v)$ in the first $r$ moves and capture the robber.

We show that a.a.s., a matching in $H$ saturating $B_r(v)$ exists. Indeed, suppose that no such matching exists. Then by Hall's theorem, there exists a set $S \subseteq B_r(v)$ such that $|B_r(S) \cap C| < |S|$. We denote by $A_S$ the event that $|B_r(S) \cap C| < |S|$, and we calculate $\Pr(A_S)$. The quantity $|B_r(S) \cap C|$ is a binomially distributed random variable with expected value $p|B_r(S)|$. 

By Lemma \ref{lemmaR} and (\ref{eq:ball around S large}), 
$$|B_r(S)| > \min\{|S|(1+\varepsilon)^r, n^{1-\gamma}\} \geq \min \{|S|n^\kappa, n^{1-\gamma}\} = |S|n^\kappa,$$
with the last inequality coming from the facts that $\gamma < \frac{1}{2} - \kappa$, $|S| < \sqrt{n}$, and that $n$ is large enough.
Therefore,  $p|B_r(S)| > |S| \log^3 n$.
By applying (\ref{eq:Chernoff}) with $t = |S|(\log^3 n - 1)$, we can estimate as follows: 
$$\Pr(A_S) < 2 \exp \left ({\frac{-|S|^2(\log^3 n - 1)^2}{3|S| \log^3 n}} \right ) < \exp \left ({-\frac{1}{3}|S| \log^2 n} \right ).$$
Then, by considering the nonempty sets $S \subseteq B_r(v)$ of all sizes $a\le \sqrt{n}$, the probability that no event $A_S$ occurs is at least 
\begin{eqnarray*}
     1 - \sum_{a = 1}^{\sqrt{n}} \binom{\sqrt{n}}{a} e^{-\frac{1}{3}a \log^2 n} &>& 
     1 - \sum_{a = 1}^{\sqrt{n}}  e^{\frac{1}{2}a \log n -\frac{1}{3}a \log^2 n} \\
     &>& 1 - \sum_{a = 1}^{\infty}  e^{-\frac{1}{6}a \log^2 n} \\
     &=& 1 - \frac{e^{-\frac{1}{6} \log^2 n}}{1-e^{-\frac{1}{6} \log^2 n}} = 1 - o(\tfrac{1}{n}).
\end{eqnarray*}
We see that a.a.s.\ no event $A_S$ occurs for the robber's choice of $v$, and as the robber has $n$ choices for his starting vertex, we see furthermore that a.a.s.\ no event $A_S$ occurs regardless of the vertex $v \in V(G)$ at which the robber chooses to begin the game. It follows that a.a.s.\ we have a team of at most $2n^{1-\kappa} \log^3 n$ cops who have a strategy to capture the robber in $r$ moves. Recalling that $\kappa = (\frac{1}{2} - 2 \delta) \log_{\Delta - 1}(1+\varepsilon)$, and letting $\delta$ tend to $0$, we see that there is a winning strategy with $n^{1-\frac{1}{2}\log_{\Delta - 1}(1+\varepsilon)  + o(1)}$ cops.
\end{proof}

The last ingredient we will need is a connection between vertex expansion and the eigenvalues of a regular bipartite graph, which was proven by Tanner (see Theorem 2.1 in \cite{Tanner}).

\begin{lemma}
\label{lem:Tanner}
Let $H$ be a $d$-regular bipartite graph on $n$ vertices with a balanced bipartition $X \cup Y$. Let $B \in {\mathbb R}^{X\times Y}$ be the square matrix whose $(x,y)$-entry ($(x,y)\in X\times Y$) is $1$ if $x$ is adjacent to $y$ and is $0$ otherwise. If the second largest eigenvalue of $BB^T$ is at most $\lambda$, then any set $S \subseteq X$ has at least $f(|S|)$ neighbors in $Y$, where 
$$f(|S|) = \frac{d^2|S|}{\lambda + 2(d^2 - \lambda)|S|/n}.$$
\end{lemma}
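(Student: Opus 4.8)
The plan is to run the standard spectral expansion argument on the biadjacency matrix $B$, extracting a two-sided estimate of $\|B^T\mathbf{1}_S\|^2$ for the indicator vector $\mathbf{1}_S$ of a set $S\subseteq X$. First I would record the relevant spectral facts about $BB^T$. Since $H$ is $d$-regular and bipartite, every row and every column of $B$ contains exactly $d$ ones, so $B\mathbf{1}_Y=d\mathbf{1}_X$ and $B^T\mathbf{1}_X=d\mathbf{1}_Y$, whence $BB^T\mathbf{1}_X=d^2\mathbf{1}_X$. Thus $\mathbf{1}_X$ is an eigenvector of the symmetric positive semidefinite matrix $BB^T$ with eigenvalue $d^2$, which is its largest eigenvalue, and by hypothesis every eigenvalue of $BB^T$ on the orthogonal complement of $\mathbf{1}_X$ is at most $\lambda$.

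Next I would fix a nonempty $S\subseteq X$, set $T=N(S)\subseteq Y$, and study the vector $B^T\mathbf{1}_S\in\mathbb{R}^Y$, whose $y$-entry counts the neighbors of $y$ in $S$; in particular this vector is supported on $T$ and its entries sum to $\mathbf{1}_X^T B\,\mathbf{1}_Y$ applied appropriately, namely $\sum_y(B^T\mathbf{1}_S)_y=(B\mathbf{1}_Y)^T\mathbf{1}_S=d|S|$. For the \emph{lower} bound I would apply Cauchy–Schwarz over the support $T$:
$$(d|S|)^2=\Bigl(\sum_{y\in T}(B^T\mathbf{1}_S)_y\Bigr)^2\le |T|\sum_{y\in T}(B^T\mathbf{1}_S)_y^2=|T|\,\|B^T\mathbf{1}_S\|^2,$$
so that $\|B^T\mathbf{1}_S\|^2\ge d^2|S|^2/|T|$.

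For the \emph{upper} bound I would write $\|B^T\mathbf{1}_S\|^2=\mathbf{1}_S^T BB^T\mathbf{1}_S$ and decompose orthogonally as $\mathbf{1}_S=\alpha\mathbf{1}_X+\mathbf{v}$ with $\mathbf{v}\perp\mathbf{1}_X$. Here the balanced partition ($|X|=n/2$) forces $\alpha=\langle\mathbf{1}_S,\mathbf{1}_X\rangle/\|\mathbf{1}_X\|^2=2|S|/n$, and $\|\mathbf{v}\|^2=|S|-\alpha^2(n/2)=|S|-2|S|^2/n$. Using that $\mathbf{1}_X$ is an eigenvector (so the cross terms $\mathbf{1}_X^T BB^T\mathbf{v}=d^2\langle\mathbf{1}_X,\mathbf{v}\rangle=0$ vanish) together with the eigenvalue bound $\mathbf{v}^T BB^T\mathbf{v}\le\lambda\|\mathbf{v}\|^2$ on the complement, I would obtain
$$\|B^T\mathbf{1}_S\|^2\le \alpha^2 d^2\tfrac{n}{2}+\lambda\|\mathbf{v}\|^2=\frac{2d^2|S|^2}{n}+\lambda\Bigl(|S|-\frac{2|S|^2}{n}\Bigr)=\lambda|S|+\frac{2(d^2-\lambda)|S|^2}{n}.$$

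Finally I would combine the two bounds, $d^2|S|^2/|T|\le \lambda|S|+2(d^2-\lambda)|S|^2/n$, and solve for $|T|$, which yields exactly $|T|\ge d^2|S|/(\lambda+2(d^2-\lambda)|S|/n)=f(|S|)$. This argument is short and entirely linear-algebraic; the only place demanding genuine care is the orthogonal-decomposition step, where the balanced bipartition is what produces the coefficient $\alpha=2|S|/n$ and hence the factor $2$ appearing in $f$, and where one must correctly isolate the top eigenspace before invoking the spectral gap on $\mathbf{v}$. No serious obstacle is expected beyond this bookkeeping.
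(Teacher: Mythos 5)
Your proof is correct in every step: the identity $BB^T\mathbf{1}_X=d^2\mathbf{1}_X$, the Cauchy--Schwarz lower bound $\|B^T\mathbf{1}_S\|^2\ge d^2|S|^2/|T|$, the orthogonal decomposition with $\alpha=2|S|/n$ (where the balanced bipartition $|X|=n/2$ enters), and the spectral bound $\mathbf{v}^T BB^T\mathbf{v}\le\lambda\|\mathbf{v}\|^2$ all combine exactly as you say to give $|T|\ge f(|S|)$. Note that the paper offers no proof of this lemma at all --- it simply cites Tanner (Theorem 2.1 of that reference) --- and your argument is precisely the standard spectral proof of Tanner's theorem, so you have in effect supplied the proof the paper delegates to the literature.
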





\begin{lemma} 
Let $0 < \gamma < 1$ be fixed. Let $G$ be a connected $d$-regular bipartite graph. Suppose that $G$ has eigenvalues $d = \lambda_1(G) \geq \lambda_2(G) \geq \dots \geq  \lambda_n(G) = -d$. Then $h_{\gamma}(G) \geq \Bigl(\frac{d}{\lambda_2(G)}\Bigr)^2 - 1 - o(1)$, where the asymptotics refer to the order of the graph.
\label{lemmaExpand}
\end{lemma}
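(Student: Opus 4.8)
The plan is to connect the isoperimetric parameter $h_\gamma(G)$ to the spectral gap via Tanner's bound (Lemma~\ref{lem:Tanner}), since both concern vertex expansion of small sets in a $d$-regular bipartite graph. The key observation is that the definition of $h_\gamma(G)$ only requires good expansion for sets $S$ with $|S| \le n^{1-\gamma}$, i.e.\ for sets whose size is $o(n)$. For such small sets the term $2(d^2-\lambda)|S|/n$ in the denominator of Tanner's function $f$ is negligible, and $f(|S|)/|S|$ tends to $d^2/\lambda$. This is exactly the regime where the spectral bound is strongest.

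First I would recall that $G$ is bipartite and $d$-regular, so its adjacency matrix $A$ has the block form $\left(\begin{smallmatrix} 0 & B \\ B^T & 0\end{smallmatrix}\right)$, and the eigenvalues of $A$ come in pairs $\pm\mu$ where the $\mu^2$ are the eigenvalues of $BB^T$. In particular the second largest eigenvalue of $BB^T$ equals $\lambda_2(G)^2$, so I set $\lambda = \lambda_2(G)^2$ in Lemma~\ref{lem:Tanner}. One technical point: Lemma~\ref{lem:Tanner} is stated for a \emph{balanced} bipartition $X \cup Y$, and $\partial S$ in Definition~\ref{defIso} counts neighbors in all of $V(G)\setminus S$, whereas Tanner's $f(|S|)$ counts neighbors within the opposite side $Y$. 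For a set $S$ lying entirely in one part $X$, every neighbor of $S$ lies in $Y\setminus S$, so Tanner's bound directly lower-bounds $|\partial S|$; a general small set $S$ can be split into its parts in $X$ and $Y$ and handled componentwise, which only helps the ratio.

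The core estimate is then, for any $S$ with $1 \le |S| \le n^{1-\gamma}$,
\begin{equation*}
  \frac{|\partial S|}{|S|} \;\ge\; \frac{f(|S|)}{|S|} \;=\; \frac{d^2}{\lambda_2(G)^2 + 2(d^2 - \lambda_2(G)^2)|S|/n}.
\end{equation*}
Since $|S| \le n^{1-\gamma}$, the correction term satisfies $2(d^2-\lambda_2(G)^2)|S|/n \le 2d^2 n^{-\gamma} = o(1)$ as $n \to \infty$ (here $\gamma$ is fixed and positive). Factoring $\lambda_2(G)^2$ out of the denominator and using $\tfrac{1}{1+x} = 1 - O(x)$ for small $x$, I get
\begin{equation*}
  \frac{|\partial S|}{|S|} \;\ge\; \Bigl(\frac{d}{\lambda_2(G)}\Bigr)^2 (1 - o(1)) \;=\; \Bigl(\frac{d}{\lambda_2(G)}\Bigr)^2 - o(1),
\end{equation*}
where the $o(1)$ absorbs the fixed factor $(d/\lambda_2(G))^2$. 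Taking the minimum over all admissible $S$ gives $h_\gamma(G) \ge (d/\lambda_2(G))^2 - o(1)$. The stated bound has an extra $-1$, which I would obtain by writing the conclusion as $(d/\lambda_2(G))^2 - 1 - o(1)$; the $-1$ is a harmless weakening that likely reflects how the parameter is later applied (e.g.\ comparing $1+\varepsilon$ to $h_\gamma$ in Theorem~\ref{thmhgamma}).

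The main obstacle I anticipate is reconciling the bookkeeping between the two expansion notions: Tanner's lemma assumes a balanced bipartition and measures expansion from one fixed side to the other, while $h_\gamma$ is symmetric and counts all outside neighbors. I would need to argue carefully that restricting to one-sided sets is without loss (the minimum in $h_\gamma$ is attained on sets that can be reduced to a single part, or the splitting argument suffices), and that $\lambda_2(BB^T) = \lambda_2(G)^2$ holds despite the possible presence of the eigenvalue $-d$ and multiplicities. Once these identifications are pinned down, the analytic estimate is the routine $n^{-\gamma} \to 0$ computation sketched above.
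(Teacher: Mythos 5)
Your route starts out the same as the paper's: apply Tanner's lemma after identifying the second eigenvalue of $BB^T$ with $\lambda_2(G)^2$ via the block form of $A(G)$, and note that for $|S| \le n^{1-\gamma}$ the correction term in Tanner's denominator is $o(1)$. But there is a genuine gap at exactly the point you flag and then wave away: sets $S$ that meet both sides of the bipartition. Your claim that splitting $S$ into $S\cap X$ and $S\cap Y$ and ``handling componentwise only helps the ratio'' is not a valid inference, and your suggestion that the $-1$ in the statement is ``a harmless weakening'' gets the logic backwards. Tanner's lemma lower-bounds $|N(S\cap X)|$, a set of vertices in $Y$; but some of those vertices may lie inside $S\cap Y$, in which case they contribute nothing to $\partial S$. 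A ball $S=\{x\}\cup N(x)$ already exhibits the problem: all $d$ neighbors of $x$ guaranteed by Tanner's bound applied to $\{x\}$ are swallowed by $S$ itself. (This example also kills your fallback claim that the minimum defining $h_\gamma$ is attained on one-sided sets: the ball has ratio roughly $d-2$, strictly smaller than the ratio $d$ of its one-sided piece $\{x\}$, so two-sided sets can genuinely be the minimizers.) As written, your argument proves the inequality only for $S$ contained in a single part; for general $S$, the bound $(d/\lambda_2(G))^2 - o(1)$ that you assert does not follow.

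The repair — and this is precisely what the paper does — is to subtract the possible overlap explicitly. Since $N(S\cap X)\setminus S \subseteq Y$ and $N(S\cap Y)\setminus S \subseteq X$ are disjoint subsets of $\partial S$, and since $|N(S\cap X)\setminus S| \ge |N(S\cap X)| - |S\cap Y|$ (symmetrically for the other part), Tanner's bound gives
\begin{align*}
|\partial S| \;&\ge\; \Bigl(\bigl(\tfrac{d}{\lambda_2(G)}\bigr)^2 - o(1)\Bigr)|S\cap X| - |S\cap Y| \;+\; \Bigl(\bigl(\tfrac{d}{\lambda_2(G)}\bigr)^2 - o(1)\Bigr)|S\cap Y| - |S\cap X| \\
&=\; \Bigl(\bigl(\tfrac{d}{\lambda_2(G)}\bigr)^2 - 1 - o(1)\Bigr)\,|S|.
\end{align*}
So the $-1$ is forced by these cross-terms; it is not slack added for convenience. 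Once you replace your splitting claim by this computation, the rest of your write-up (the block decomposition, the identification $\lambda_2(BB^T)=\lambda_2(G)^2$ using the symmetry of the bipartite spectrum, and the $n^{-\gamma}\to 0$ estimate) coincides with the paper's proof.
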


\begin{proof}
Let $G$ have bipartition $X \cup Y$. We set $\lambda = \lambda_2(G)$ and $\alpha =\lambda^2/d$. If $\alpha = d$, then the lemma is trivial, so we assume that $\alpha < d$. The adjacency matrix of $G$ can be written in the form
\[A(G)=
\left[
\begin{array}{c|c}
0 & B \\
\hline
B^T & 0
\end{array}
\right] ,
\]
where $B$ is defined for $G$ as in Lemma \ref{lem:Tanner}. It follows that
\[A^2(G)=
\left[
\begin{array}{c|c}
BB^T & 0 \\
\hline
0 & B^T B
\end{array}
\right].
\]
Matrices $BB^T$ and $B^TB$ have the same eigenvalues and it is easy to see that they are equal to the squares of the eigenvalues of $G$. In particular, the second largest eigenvalue of each of $BB^T$ and $B^T B$ is equal to $\lambda^2  = \alpha d$. 

Finally, we consider a set $S \subseteq V(G), |S| \leq n^{1 - \gamma}$. By Lemma \ref{lem:Tanner}, $S \cap X$ has at least $ (\frac{d }{\alpha} - o(1))|S \cap X|$ neighbors in $Y$, and by applying Lemma \ref{lem:Tanner} with $X$ and $Y$ exchanged, $S \cap Y$ has at least $ (\frac{d }{\alpha} - o(1))|S \cap Y|$ neighbors in $X$. This implies that 
$$
  |\partial(S \cap X)\setminus S| \ge \Bigl(\frac{d }{\alpha} - o(1)\Bigr)|S \cap X| - |S\cap Y|
$$
and
$$
  |\partial(S \cap Y)\setminus S| \ge \Bigl(\frac{d }{\alpha} - o(1)\Bigr)|S \cap Y| - |S\cap X|.
$$
By summing up these two inequalities, we see that 
$|\partial S| \geq (\frac{d}{\alpha} - 1 -  o(1))|S|$, or equivalently, 

$$\frac{|\partial S|}{|S|} \geq \Bigl(\frac{d}{\lambda_2(G)}\Bigr)^2 - 1 -  o(1).$$
\end{proof}

We are ready to prove our upper bound on the coefficient $c$ of $g$. By letting $p$ tend to infinity in the following theorem, we obtain families of $d$-regular graphs of girth $g$ and cop number at most $(d-1)^{c g}$, with $c$ arbitrarily close to $\frac{3}{8}$.

\begin{theorem}\label{thm:3/8}
Let $p$ be a prime number for which $p \equiv 1 \mod 4$, and let $d=p+1$. There exists an infinite family of $d$-regular graphs $X$ with increasing girth $g$, whose cop number is bounded above by 
$$c(X) \leq p^{ (1 + 2\log_p 4 + o(1))\frac{3}{8}g}.$$
\end{theorem}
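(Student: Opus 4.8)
The plan is to combine the general expander upper bound of Theorem~\ref{thmhgamma} with the spectral expansion estimate of Lemma~\ref{lemmaExpand}, applied to the specific family of LPS Ramanujan graphs $X^{p,\ell}$ constructed in \cite{LPS}. Recall the relevant facts about these graphs: for primes $p\equiv 1\pmod 4$ and a suitable auxiliary prime $\ell$, the graph $X^{p,\ell}$ is $(p+1)$-regular, and in the case that $p$ is not a quadratic residue mod $\ell$ it is bipartite on $n$ vertices with $n = \ell(\ell^2-1)$ (up to constants). The two key quantitative properties I would invoke are the Ramanujan bound on the second eigenvalue, $\lambda_2(X^{p,\ell}) \le 2\sqrt{p} = 2\sqrt{d-1}$, and the LPS girth lower bound $g \ge (\tfrac{4}{3} - o(1))\log_p n$, equivalently $n \le p^{(\frac{3}{4}+o(1))g}$.

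First I would feed the Ramanujan eigenvalue bound into Lemma~\ref{lemmaExpand}. With $d = p+1$ and $\lambda_2 \le 2\sqrt{p}$, the lemma gives, for any fixed $0<\gamma<1$,
\[
  h_{\gamma}(X^{p,\ell}) \;\ge\; \Bigl(\frac{d}{\lambda_2}\Bigr)^{2} - 1 - o(1)
  \;\ge\; \frac{(p+1)^2}{4p} - 1 - o(1) \;=\; \frac{(p-1)^2}{4p} - o(1).
\]
So I may set $\varepsilon = \frac{(p-1)^2}{4p} - o(1)$ (a fixed positive constant for fixed $p$) as the vertex-expansion parameter, and $\Delta = d = p+1$ as the maximum degree. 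The next step is to compute the exponent produced by Theorem~\ref{thmhgamma}, namely $1 - \tfrac{1}{2}\log_{\Delta-1}(1+\varepsilon)$. Here $\Delta - 1 = p$ and $1+\varepsilon \ge \frac{(p+1)^2}{4p} - o(1)$, so $\log_p(1+\varepsilon) = \log_p\frac{(p+1)^2}{4p} - o(1) = (2\log_p(p+1) - \log_p 4 - 1) - o(1) = 1 - \log_p 4 - o(1)$, using $\log_p(p+1)=1+o(1)$. Thus Theorem~\ref{thmhgamma} yields $c(X^{p,\ell}) \le n^{1 - \frac{1}{2}(1-\log_p 4) + o(1)} = n^{\frac{1}{2} + \frac{1}{2}\log_p 4 + o(1)}$, provided $\gamma$ is chosen small enough to satisfy the hypothesis $\gamma \le \frac{1}{2}(1-\log_{p}(1+\varepsilon))$ of that theorem and provided $n$ is large (which holds as $\ell\to\infty$).

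Finally I would convert the bound from $n$ into $g$ using the LPS girth estimate $n \le p^{(\frac{3}{4}+o(1))g}$. Substituting,
\[
  c(X^{p,\ell}) \;\le\; n^{\frac{1}{2}+\frac{1}{2}\log_p 4 + o(1)}
  \;\le\; p^{(\frac{3}{4}+o(1))g \cdot (\frac{1}{2}+\frac{1}{2}\log_p 4 + o(1))}
  \;=\; p^{(1 + 2\log_p 4 + o(1))\frac{3}{8}g},
\]
which is exactly the claimed bound; letting $p\to\infty$ makes $\log_p 4\to 0$ and the exponent tends to $\tfrac{3}{8}g$, giving the advertised corollary. The main obstacle I anticipate is not in the chain of inequalities, which is routine once the ingredients are in place, but in verifying that the hypotheses of the imported results are genuinely met for the LPS family: one must confirm that the relevant $X^{p,\ell}$ are indeed bipartite (so Lemma~\ref{lemmaExpand} applies), that the Ramanujan bound $\lambda_2\le 2\sqrt{p}$ holds for the second-largest eigenvalue in absolute value of the bipartite graph (and that the trivial eigenvalues $\pm d$ are correctly excluded), and that $\gamma$ can be fixed uniformly so that the expansion $h_\gamma \ge \varepsilon$ and the degree/$\gamma$ constraints of Theorem~\ref{thmhgamma} hold simultaneously as $n=n(\ell)\to\infty$ along the family. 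Keeping careful track of which $o(1)$ terms are with respect to $n\to\infty$ (fixed $p$) versus $p\to\infty$ is the one place where care is required.
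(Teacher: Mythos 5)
Your proposal is correct and follows essentially the same route as the paper: apply Lemma \ref{lemmaExpand} with the Ramanujan eigenvalue bound to get vertex expansion for the bipartite LPS graphs, feed this into Theorem \ref{thmhgamma}, and convert from $n$ to $g$ via $n \le p^{(\frac{3}{4}+o(1))g}$. The only difference is that you retain the sharper estimate $1+\varepsilon \ge \frac{(p+1)^2}{4p}-o(1)$ where the paper rounds down to $\frac{p+1}{4}$ (obtaining $c(X)\le n^{\frac{1}{2}+\log_p 4+o(1)}$ rather than your $n^{\frac{1}{2}+\frac{1}{2}\log_p 4+o(1)}$), so your final exponent is actually $\frac{3}{8}\bigl(1+\log_p 4+o(1)\bigr)g$, slightly stronger than the stated bound, and the last ``$=$'' in your chain should therefore be ``$\le$''.
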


\begin{proof}
We use the Ramanujan graphs $X^{p,q}$ discovered by Lubotsky, Phillips, and Sarnak in \cite{LPS}. These graphs are $d$-regular, where $d=p+1$ and $p$ is any prime that $p \equiv 1 \mod 4$. The second parameter $q$ is also a prime that is larger than $\sqrt{p}$, $q \equiv 1 \mod 4$, and such that the Legendre symbol of $q$ and $p$ satisfies $\left( \frac{q}{p} \right) = -1$. By the definition of Ramanujan graphs, their second eigenvalue $\lambda_2$ satisfies $\lambda_2^2 \leq 4(d-1)$.
The graph $X=X^{p,q}$ is a $d$-regular bipartite Cayley graph on $n = q(q^2 - 1)$ vertices, and the girth $g$ of $X^{p,q}$ satisfies $g \geq \frac{4 \log q}{\log p} - 1$ \cite{LPS}.

 
We now compute an upper bound on the cop number $c(X)$ of $X$. By Lemma \ref{lemmaExpand}, $h_{\gamma}(X^{p,q})  \geq \frac{d}{4} - 1 - o(1) > \frac{p+1}{4} - 1 - o(1)$, for any $0 < \gamma < 1$. Then, by applying Theorem \ref{thmhgamma}, 
$$c(X)  \leq n^{1 - \frac{1}{2} \log_p \frac{p+1}{4} + o(1)} \le n^{\frac{1}{2} + \log_p 4 + o(1)}.$$

Finally, we express this bound in terms of the girth and minimum degree of $X$. As $X$ is a $(p+1)$-regular Cayley graph, $\delta(X) - 1 = p$. Furthermore, as $q$ tends to infinity, $g(X) \geq (4+o(1)) \frac{\log q}{\log  p}$. As $n = (1 - o(1))q^3$, it follows that $n \leq p^{(\frac{3}{4} + o(1))g}$, and hence $c(X) \leq p^{ (1 + 2\log_p 4 + o(1))\frac{3}{8}g}$. This completes the proof.
\end{proof}

We may also use Theorem \ref{thmhgamma} to gain upper bounds on cop number in terms of a better known graph parameter and give a sufficient condition for the cop number of a graph on $n$ vertices to be of the form $O(n^{1-\alpha})$ for some constant $\alpha > 0$.
The \emph{isoperimetric number} $h(G)$ of $G$ is defined as follows:
$$h(G) = \min_{0 < |S| \leq n/2} \frac{|\delta S|}{|S|},$$
where $\delta S$ is the set of edges with exactly one endpoint in $S$. (See \cite{Mo-Isoperim} for more details.)

From Theorem \ref{thmhgamma} and Definition \ref{defIso}, we obtain the following corollary.



\begin{corollary}
\label{corWeak}
Let $\mathcal{G}$ be a family of graphs such that for all $G \in \mathcal{G}$, the maximum degree of $G$ is at most $\Delta \geq 3$. Let $0 < \varepsilon \leq \Delta - 2$, and suppose that $h(G) \geq \varepsilon$ for all $G \in \mathcal{G}$ sufficiently large. Then for all $G \in \mathcal{G}$, $c(G) \leq n^{1-\frac{1}{2}\log_{\Delta - 1}(1+\varepsilon/ \Delta)+o(1)}$.
\end{corollary}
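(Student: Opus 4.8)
The plan is to derive Corollary \ref{corWeak} directly from Theorem \ref{thmhgamma} by translating the edge-isoperimetric bound into a vertex-expansion bound, i.e.\ into a lower bound on the parameter $h_\gamma(G)$. The essential observation is that an edge boundary and a vertex boundary are related through the maximum degree: if $S$ is a vertex set with edge boundary $\delta S$, then each vertex of $\partial S$ is incident to at most $\Delta$ of the boundary edges, so $|\partial S| \geq |\delta S| / \Delta$. Combined with the hypothesis $|\delta S| \geq h(G)\,|S| \geq \varepsilon |S|$, this yields $|\partial S| \geq (\varepsilon/\Delta)\,|S|$, at least for sets $S$ in the range where the isoperimetric bound applies.

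First I would fix a graph $G \in \mathcal{G}$ that is sufficiently large (so that $h(G) \geq \varepsilon$), and fix $\gamma$ in the range $0 < \gamma \leq \tfrac{1}{2}(1 - \log_{\Delta-1}(1 + \varepsilon/\Delta))$ so that Theorem \ref{thmhgamma} will apply with the expansion constant $\varepsilon' := \varepsilon/\Delta$ in place of $\varepsilon$. Next I would verify the range condition: Definition \ref{defIso} minimizes over sets of size at most $n^{1-\gamma}$, whereas $h(G)$ minimizes over sets of size at most $n/2$; since $n^{1-\gamma} \leq n/2$ for large $n$, every set relevant to $h_\gamma(G)$ is also relevant to $h(G)$, so the isoperimetric hypothesis $|\delta S| \geq \varepsilon |S|$ holds for all such $S$. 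Applying the edge-to-vertex inequality above then gives $|\partial S|/|S| \geq \varepsilon/\Delta$ for every nonempty $S$ with $|S| \leq n^{1-\gamma}$, i.e.\ $h_\gamma(G) \geq \varepsilon/\Delta =: \varepsilon'$.

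With this in hand I would invoke Theorem \ref{thmhgamma} with $\varepsilon'$, checking that its hypotheses are met: the maximum degree is at most $\Delta \geq 3$, we have $0 < \varepsilon' = \varepsilon/\Delta \leq \Delta - 2$ (which follows from $\varepsilon \leq \Delta - 2 \leq \Delta(\Delta-2)$ for $\Delta \geq 3$), and our choice of $\gamma$ satisfies the stated upper bound with $\varepsilon'$ substituted for $\varepsilon$. The theorem then yields
$$c(G) \leq n^{1 - \frac{1}{2}\log_{\Delta-1}(1+\varepsilon') + o(1)} = n^{1 - \frac{1}{2}\log_{\Delta-1}(1+\varepsilon/\Delta) + o(1)},$$
which is exactly the claimed bound.

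The only genuinely delicate point, rather than a routine calculation, is the bookkeeping about which sets each minimum ranges over and ensuring the constant $\varepsilon' = \varepsilon/\Delta$ stays within the admissible window for Theorem \ref{thmhgamma}; in particular one must confirm $\varepsilon/\Delta \leq \Delta - 2$ and that the permitted range of $\gamma$ does not collapse to the empty interval. Since $\log_{\Delta-1}(1+\varepsilon/\Delta)$ is strictly less than $1$ whenever $\varepsilon/\Delta < \Delta - 2$ (equivalently $1 + \varepsilon/\Delta < \Delta - 1$), the interval for $\gamma$ is nonempty and the argument goes through. No step presents a substantial obstacle; the corollary is essentially an application of Theorem \ref{thmhgamma} after the one-line passage from edge expansion to vertex expansion.
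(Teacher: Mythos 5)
Your proposal is correct and is precisely the argument the paper intends: the paper leaves the proof implicit (``From Theorem \ref{thmhgamma} and Definition \ref{defIso}, we obtain the following corollary''), and the $\varepsilon/\Delta$ appearing in the exponent of the stated bound is exactly the loss incurred by your edge-to-vertex boundary conversion $|\partial S| \geq |\delta S|/\Delta$, so $h_\gamma(G) \geq \varepsilon/\Delta$ and Theorem \ref{thmhgamma} applies with $\varepsilon' = \varepsilon/\Delta$. Your additional checks --- that $n^{1-\gamma} \leq n/2$ for large $n$, that $\varepsilon/\Delta \leq \Delta - 2$, and that the admissible interval for $\gamma$ is nonempty since $1 + \varepsilon/\Delta < \Delta - 1$ --- are all sound and complete the derivation.
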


Corollary \ref{corWeak} gives an implication about the so-called \emph{Weak Meyniel's Conjecture}. While Meyniel's Conjecture \ref{conjMeyniel} asserts that the cop number of a graph on $n$ vertices is bounded by $O(\sqrt{n})$, its weakened version asserts that there exists a value $\alpha > 0$ such that every graph on $n$ vertices has cop number $O(n^{1-\alpha})$ (see, e.g.~\cite{Baird}). Even this weakened conjecture is still widely open. Corollary \ref{corWeak} implies that the weakened Meyniel's conjecture holds for expanders of bounded degree.

\section{Conclusion}
We have shown that in the optimal lower bound for the cop number of a graph with girth $g$ and minimum degree $\delta$ of the form $\Omega((\delta - 1)^{cg})$, the constant $c$ satisfies $\frac{1}{4} \leq c \leq \frac{3}{8}$. We suspect that $\frac{1}{4}$ is the correct answer, and it would be interesting to reduce or close this gap between the lower and upper bounds of $c$. While Meyniel's conjecture and the folklore conjecture about degree and diameter imply that $\frac{1}{4}$ is indeed correct, it may be possible to show $c = \frac{1}{4}$ without using these conjectures.

\raggedright
\bibliographystyle{abbrv}
\bibliography{bibGirth}

\end{document}